\def\mathink#1{#1}
\def\r{\sigma}
\def\fx#1#2{\left(\frac{#1}{#2}-1\right)}
\numberwithin{equation}{section}
\theoremstyle{plain}
\newtheorem{theorem}{Theorem}[section]
\newtheorem{prop}{Proposition}[section]
\newtheorem{lemma}{Lemma}[section]
\newtheorem{cor}{Corollary}[section]
\def\SECT{\section}
\def\SSECT{\subsection}
\def\Grp#1{\left(#1\right)}
\def\Cbr#1{\left\{#1\right\}}
\def\Sbr#1{\left[#1\right]}
\def\Abs#1{\left|#1\right|}
\def\nth#1{\frac{1}{#1}}
\def\cf#1{\mathbf{1}\Cbr{#1}}
\def\cum#1#2{{#1}_1+\cdots+{#1}_{#2}}
\def\inline#1{\mbox{$#1$}}
\def\Reals{\mathbb{R}}
\def\toi{\to\infty}
\def\Linf{\mathop{\underline{\mathrm{lim}}}}
\def\Lsup{\mathop{\overline{\mathrm{lim}}}}
\begin{document}
\begin{frontmatter}
  \title{On the asymptotic of likelihood ratios for self-normalized
    large deviations}

  \runtitle{Likelihood ratio of self-normalized LDP}

  \begin{aug}
    \author{
      \fnms{Zhiyi}
      \snm{Chi}
      \thanksref{nih}
      \ead[label=email]{zchi@stat.uconn.edu}
    }
    \thankstext{nih}{
      Research partially supported by NSF grant DMS-0706048 and NIH
      grant DC007206-01.
    }

    \runauthor{Z. Chi}
    
    \affiliation{
      Department of Statistics, University of Connecticut \\
      \rule{0em}{1em}
    }

    \address{
      Department of Statistics \\
      University of Connecticut \\
      215 Glenbrook Road, U-4120 \\
      Storrs, CT 06269 \\
      \printead{email}
    }
  \end{aug}

  \begin{abstract}
    Motivated by multiple statistical hypothesis testing, we obtain
    the limit of likelihood ratio of large deviations for
    self-normalized random variables, specifically, the ratio of
    $P(\mathink{\sqrt{n}(\bar X +d/n)} \ge x_n V)$ to
    $P(\mathink{\sqrt{n}\bar X} \ge x_n V)$, as $n\toi$, 
    where $\bar X$ and $V$ are the sample mean and standard deviation
    of iid $X_1, \ldots, X_n$, respectively, $d>0$ is a constant and
    $x_n \toi$.  We show that the limit can have a simple form
    $e^{d/z_0}$, where $z_0$ is the unique maximizer of $z f(x)$ with
    $f$ the density of $X_i$.  The result is applied to derive the
    minimum sample size per test in order to control the error rate of
    multiple testing at a target level, when real signals are
    different from noise signals only by a small shift.
  \end{abstract}

  \begin{keyword}[class=AMS]
    \kwd[Primary ]{60F10}
    \kwd[; secondary ]{62H15}
  \end{keyword}

  \begin{keyword}
    \kwd{self-normalize, large deviations, tail probability, multiple
      hypothesis testing}
  \end{keyword}
  
\end{frontmatter}

\SECT{Introduction}

\SSECT{Background}
Suppose $X_1$, $X_2$, \ldots are iid random variables with density
$f$, such that $P(X_1>0)>0$.  For $n\ge 1$, let
$S_n = \cum X n$.  We shall consider the biased $t$ statistic
\begin{align*}
  T_n = \frac{\sqrt{n} \bar X}{V}, \ \ \text{ with }\ 
  \bar X = \frac{S_n}{n},\ \ V = \Sbr{\nth
    n\sum_{i=1}^n (X_i - \bar X)^2}^{1/2}.
\end{align*}
The choice for $T_n$ is only for simplicity of notation.  All the
results obtained for $T_n$ in the paper hold for the standard $t$
statistic $\sqrt{n-1}\bar X/V$ as well.

The aim here is to find the limit of the ratio of tail probabilities
for $T_n$, specifically, the limit of
\begin{align*}
  \frac{P\Grp{\mathink{\sqrt{n}(\bar X + d/n)} \ge x_n V}}{
    P\Grp{\mathink{\sqrt{n}\bar X} \ge x_n V}}, \qquad\text{as $n\toi$},
\end{align*}
where $d>0$ is a constant and $x_n\toi$ in a suitable rate.  The
problem pertains to large deviations for self-normalized random
variables \cite{dembo:shao:06, shao:97}.  On the other hand, it is
directly related to statistical multiple hypothesis testing, in
particular, the False Discovery Rate (FDR) control
\cite{benjamini:hoc:95}, which in recent years has generated intensive
research due to its applications in microarray data analysis, medical
imagery, etc, where a very large number of signals (``null
hypotheses'') have to be sorted through in order to identify signals
of interest (``false nulls'') from the other, noise signals (``true
nulls'') \cite{efron:etal:01, genovese:was:02,
  pacifico:gen:ver:was:04, storey:tay:sie:04}. 

A measure of performance for multiple testing is the fraction of
falsely identified noise signals (``false discoveries'') among the
identified ones.  Given that at least one signal is identified, the
fraction is a well-defined random variable and its conditional
expectation is called positive FDR, or pFDR.  For a testing procedure,
it is desirable that, given a target control level $\alpha$, the
procedure attains pFDR $\le \alpha$.  However, whether or not this is
possible depends on the property of the data distributions as well as
how much data is available to assess the hypotheses.  We consider a
typical multiple testing problem, where the data distributions are
shifted and scaled versions of each other.

Suppose the data distributions are $F_i(x) = F(s_i x - u_i)$, where
$F$ is a fixed distribution, and $s_i>0$ and $u_i$ are unknown.  In
order to identify from $F_i$ those with $u_i \not= 0$, we test null
(hypotheses) $H_i: u_i=0$ to see which one can be rejected.  To this
end, let $n$ iid observations be sampled from $F_i$, which can be
written as $Y_{i1} = (X_{i1}+u_i) / s_i$, \ldots, $Y_{in} = (X_{in} +
u_i)/s_i$, with $X_{ij} \sim F$.  Suppose the nulls are tested
independently of each other, so that $X_{ij}$ are iid for $i\ge 1$,
$j=1,\ldots,n$.  Typically, $H_i$ is rejected if and only if the $t$
statistic of $Y_{i1}, \ldots, Y_{in}$ is larger than a cut-off value
$x_n$.  Suppose that false nulls occur randomly in the population of
nulls, such that each $H_i$ can be false with probability $p\in (0,1)$
independently of the others, and $u_i=u>0$ when $H_i$ is false.  By
definition, a falsely rejected null is a true null, i.e., $u_i=0$.
It is then not hard to see
\begin{align}
  P(H_i \text{ is falsely rejected}\,|\, H_i \text{ is rejected}) 
  = \frac{1-p}{1-p + p R_n},  \label{eq:post}
\end{align}
where $R_n$ is the ratio of tail probabilities
\begin{align*}
  R_n(u) = \frac{P(\sqrt{n}(\bar X + u) \ge x_n V)}{
    P(\sqrt{n}\bar X\ge x_n V)}.
\end{align*}

It follows that the minimum attainable pFDR is equal to the right hand
side of \eqref{eq:post} as well \cite{chi:tan:07}.  Consequently, if
real signals are weak in the sense that $u\approx 0$, then $R_n$ can
be close to 1, implying that when a nonempty set of nulls are
rejected by whatever multiple testing procedure, it is likely that
most or almost all of them are falsely rejected.

For the $t$ test, the only way to address the above limitation on the
error rate control is to increase $n$, the number of observations for
each null.  From \eqref{eq:post}, in order to attain pFDR $\le
\alpha$, $n$ must satisfy
\begin{align}
  R_n(u) \ge (1/p-1)(1/\alpha-1).  \label{eq:pfdr}
\end{align}
An important question is, as $u\approx 0$, what would be the minimum
$n$ in order for \eqref{eq:pfdr} to hold.

The issue of sample size for pFDR control was previously studied in
\cite{chi:07b}.  However, in that work the $t$ statistic was defined
in a different way, with $\bar X$ and $V$ derived from two independent
samples instead of from the same sample.  Although that definition
allows an easier treatment, it is not commonly used in practice.
Furthermore, the asymptotic result in \cite{chi:07b} is different from
the one reported here for the more commonly used $t$ statistic.

\SSECT{Main results}
We need to be more specific about the cut-off value $x_n$.  Usually,
as $n$ increases, one can afford to look at more extreme tails to get
stronger evidence against nulls.  This suggests there should be
$x_n\toi$ as $n\toi$.  If $EX>0$ and $EX^2 < \infty$ for $X\sim F$,
then $x_n$ should be at least of the same order as $\sqrt{n}$,
otherwise $\inf\text{pFDR}\to 1$, where the infimum is taken over all
possible multiple testing procedures that are solely based on $T_i$.
Furthermore, for $F=N(0,1)$, it is known that there should be
$x_n/\sqrt{n}\toi$ in order to attain $\inf\text{pFDR}$
\cite{chi:07b}.  Based on the considerations, for the general case, we
will impose $x_n=a_n\sqrt{n}$ with $a_n\toi$ as the cut-off value.

\begin{theorem} \label{thm:self-norm}
  Suppose the density $f$ satisfies the following conditions.
  \begin{itemize}
    \item[1)] $f$ is bounded and continuous on $\Reals$ and there
      is $\gamma>0$, such that 
      \begin{align*}
        \Lsup_{x\toi} x^{1+\gamma} f(x)<\infty.
      \end{align*}
    \item[2)] $zf(z)$ has a unique maximizer $z_0>0$. 
    \item[3)] $h := \log f$ is three times differentiable on $\Reals$,
      such that $\sup |h''|<\infty$ and $\sup |h'''|<\infty$.
    \end{itemize}
    Let $a_n\toi$, such that $a_n^4 = o(n/\log n)$.  Then for
    any $d_n\to d \in (0,\infty)$,
    \begin{align*}
      \frac{P\Grp{\bar X + d_n/n \ge \mathink{a_n V}}}{
        P\Grp{\bar X \ge \mathink{a_n V}}
      }
      \to e^{d/z_0}, \quad\text{as}\ \ n\toi.
    \end{align*}
    Note that for different $n$, $\bar X$ and $V$ are different random
    variables.
\end{theorem}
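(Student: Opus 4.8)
The plan is to convert the ratio into a conditional expectation by a change of measure, and then to identify the conditional law of the sample on the large-deviation event. Write $x_n=a_n\sqrt n$ and $T_n=\sqrt n\,\bar X/V$. Since adding the constant $d_n/n$ to every observation shifts $\bar X$ by $d_n/n$ but leaves $V$ unchanged, the numerator is exactly the denominator evaluated for the density $f(\cdot-d_n/n)$; hence, with $X_i\sim f$,
\begin{align*}
  \frac{P\Grp{\bar X+d_n/n\ge a_n V}}{P\Grp{\bar X\ge a_n V}}
  \;=\; E\Sbr{L_n\mid T_n\ge x_n},
  \qquad L_n:=\prod_{i=1}^n\frac{f(X_i-d_n/n)}{f(X_i)}.
\end{align*}
Because $h:=\log f$ has bounded second derivative (condition~3), Taylor expansion gives $\log L_n=-\tfrac{d_n}{n}\sum_i h'(X_i)+O(d_n^2/n)$ uniformly, so $L_n=\exp\!\Grp{-\tfrac{d_n}{n}\sum_i h'(X_i)}(1+o(1))$. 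As $z_0$ maximizes $zf(z)$ we have $f(z_0)+z_0f'(z_0)=0$, i.e.\ $h'(z_0)=-1/z_0$. Thus it suffices to prove (i) $\tfrac1n\sum_i h'(X_i)\to-1/z_0$ in $P(\,\cdot\mid T_n\ge x_n)$-probability, and (ii) a uniform-integrability estimate that lets $E\bigl[\exp(-\tfrac{d_n}{n}\sum_i h'(X_i))\mid T_n\ge x_n\bigr]\to e^{d/z_0}$.

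Step (i) is a conditional limit theorem: given $\{T_n\ge x_n\}$, the empirical distribution of $X_1,\dots,X_n$ concentrates at $\delta_{z_0}$. Morally, $\{\bar X\ge a_n V\}$ forces the sample into a window of width $\asymp z/a_n$ around some common value $z$, whose probability is of order $(zf(z)/a_n)^n$ up to subexponential factors, and this is largest at $z=z_0$; uniqueness of $z_0$ (condition~2) together with the tail decay of $f$ (condition~1) make every other configuration exponentially less likely. To make this rigorous I would start from the elementary identity, valid on $\{S_n\ge0\}$,
\begin{align*}
  \{T_n\ge x_n\}=\Cbr{\frac{S_n}{\sqrt{Q_n}}\ge c_n}
  =\Cbr{\sup_t\Grp{tS_n-\tfrac12 t^2 Q_n}\ge\tfrac12 c_n^2},
  \qquad Q_n:=\sum_i X_i^2,
\end{align*}
with $c_n:=a_n\sqrt n/\sqrt{1+a_n^2}\to\infty$, which recasts the self-normalized event through the i.i.d.\ variables $tX_i-\tfrac12 t^2 X_i^2$. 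Exponential Chebyshev then gives, for $\lambda>0$, $P(tS_n-\tfrac12 t^2 Q_n\ge\tfrac12 c_n^2)\le e^{-\lambda c_n^2/2}\bigl(Ee^{\lambda(tX-t^2X^2/2)}\bigr)^n$ with $Ee^{\lambda(tX-t^2X^2/2)}=e^{\lambda/2}\int e^{-\lambda t^2(x-1/t)^2/2}f(x)\,dx$; optimizing in $\lambda$ (the optimum is $\lambda\asymp a_n^2$, so the Gaussian weight localizes $x$ at scale $\asymp 1/a_n$ near $1/t$), evaluating the integral by Laplace's method, maximizing over $t$ (this is where $z_0$ and the value $z_0 f(z_0)$ enter), and union-bounding over a polynomial-size grid of $t$ gives $P(T_n\ge x_n)=e^{-nI_n+o(n)}$ for an explicit rate $I_n$ with $I_n\sim\log\!\bigl(a_n/(z_0f(z_0))\bigr)$; carrying an extra factor $e^{\mu\sum_i|X_i-z_0|}$ through the same inequality upgrades the upper bound, on the event $\tfrac1n\sum_i|X_i-z_0|>\varepsilon$, to $e^{-n(I_n+\eta_\varepsilon)+o(n)}$ with $\eta_\varepsilon>0$. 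For the matching lower bound $P(T_n\ge x_n)\ge e^{-nI_n+o(n)}$ I would use the complementary change of measure: tilting $f$ by $\exp(-M(x-z_0)^2/2)$ with $M\asymp a_n^2/z_0^2$ chosen appropriately concentrates the $X_i$ at scale $\asymp z_0/a_n$ about $z_0$ so that, under the tilt, $\{T_n\ge x_n\}$ has probability bounded away from $0$ (by a local limit theorem for $(\bar X,V)$), while the likelihood-ratio cost of the tilt contributes exactly $e^{-nI_n+o(n)}$. Dividing the two bounds shows the conditional probability of $\{\tfrac1n\sum_i|X_i-z_0|>\varepsilon\}$ given $\{T_n\ge x_n\}$ is at most $e^{-n\eta_\varepsilon+o(n)}\to0$; since $h'$ is Lipschitz this gives (i). The hypothesis $a_n^4=o(n/\log n)$ is what keeps the accumulated errors (Taylor, Laplace, the local limit estimate, the grid) of order $o(n)$, hence below $n\eta_\varepsilon$.

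For (ii), $\sup|h''|<\infty$ forces $h'$ to grow at most linearly, so $\exp(-\tfrac\beta n\sum_i h'(X_i))\le e^{\beta A}\exp(\tfrac{\beta B}{n}\sum_i|X_i|)$ for constants $A,B$, and the same tilting estimates show $E[\exp(\tfrac cn\sum_i|X_i|)\mid T_n\ge x_n]$ is bounded in $n$ for every fixed $c$; this yields the uniform integrability, and together with (i) and $h'(z_0)=-1/z_0$ it gives $E[L_n\mid T_n\ge x_n]\to e^{d/z_0}$.

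I expect the main obstacle to be (i)---the two-sided large-deviation estimate $P(T_n\ge x_n)=e^{-nI_n+o(n)}$ in the regime $a_n=x_n/\sqrt n\to\infty$. One must identify the correct rate $I_n$ (the subtlety being that the relevant value is $z_0=\arg\max zf(z)$, not the mode of $f$, because the localization window has width proportional to $z/a_n$), construct a tilted law that sits exactly at the threshold of the event so that the residual probability under the tilt is only subexponentially small, and keep every approximation error uniformly of order $o(n)$---the last point being where the restriction $a_n^4=o(n/\log n)$ is used.
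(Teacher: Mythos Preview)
Your plan is essentially the paper's: write the ratio as a conditional expectation of the likelihood ratio $L_n=\prod_i f(X_i-d_n/n)/f(X_i)$, Taylor expand $\log L_n=-\tfrac{d_n}{n}\sum_i h'(X_i)+O(1/n)$, and localize at $z_0$ via large-deviation estimates built on Shao's representation $\{T_n\ge\sqrt n\,a_n\}=\{\sigma_n^2\ge\inf_{z>0}\tfrac1n\sum_i(X_i/z-1)^2\}$ with $\sigma_n\sim a_n^{-1}$; the Chernoff-plus-grid upper bound and tilted lower bound you sketch are exactly what the paper assembles in Lemmas~3.1--3.4 and Proposition~4.1 to prove its localization Lemma~2.1.

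Where the paper is sharper is in the endgame. Once the infimum is restricted to $|z-z_0|\le r$, the event depends on the sample only through $(\bar X,V)$ and lies in the deterministic region $\Gamma_n=\{(t,s):\ \exists\,|z-z_0|\le r,\ (t-z)^2+s^2\le\sigma_n^2 z^2\}$, on which $|t-z_0|<2r$ and $s\le\sigma_n(z_0+r)$. Writing $X_i=t+\sqrt n\,s\,\omega_i$ with $\sum\omega_i=0$, $\sum\omega_i^2=1$ (the spherical density formula), one gets the \emph{pointwise} bound $\bigl|\tfrac1n\sum_i h'(X_i)-h'(t)\bigr|\le(\sup|h'''|)\,s^2=O(\sigma_n^2)$ and hence $L_n/e^{-d_n h'(z_0)}\in[e^{-d_n\Delta(2r)-O(\sigma_n^2)},\,e^{d_n\Delta(2r)+O(\sigma_n^2)}]$ uniformly on the localized event. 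This sandwich makes your steps (i) and (ii) unnecessary: there is no conditional limit theorem to prove and no uniform-integrability issue, because the likelihood ratio is deterministically squeezed once localization holds. Your route through empirical-measure concentration plus UI is correct, but note that the same localization already forces $\tfrac1n\sum_i(X_i-\bar X)^2=V^2\le\sigma_n^2(z_0+r)^2\to0$ and $|\bar X-z_0|<2r$ deterministically, so $\tfrac1n\sum_i|X_i-z_0|\le V+|\bar X-z_0|$ is small on the event itself, not merely with high conditional probability; recognizing this collapses your probabilistic (i)--(ii) into a two-line deterministic estimate.
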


Let $k_*=k_*(u)$ be the minimum $n$ in order for \eqref{eq:pfdr}
to hold.  The asymptotic of $k_*$  as $u\to 0$ is a consequence of 
Theorem \ref{thm:self-norm}.
\begin{cor} \label{cor:minn}
  Suppose $f$ and $a_n$ satisfy the conditions in Theorem
  \ref{thm:self-norm}.  Let $p\in (0,1)$ and $\alpha\in (0,1)$ be
  fixed in \eqref{eq:pfdr}.  Then
  \begin{align*}
    k_*(u) \sim (z_0/u) \ln [(1/p-1)(1/\alpha-1)], \qquad
    \text{as}\ \ u\to 0+.
  \end{align*}
\end{cor}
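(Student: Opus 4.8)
The plan is to couple the per-test sample size $n$ with the shift $u$ so that Theorem~\ref{thm:self-norm} becomes directly applicable, and then to sandwich $k_*(u)$ between two explicit bounds. We assume $C:=(1/p-1)(1/\alpha-1)>1$, since for $C\le 1$ one has $R_n(u)\ge 1\ge C$ and hence $k_*(u)\equiv 1$. Because $x_n=a_n\sqrt n$, the requirement \eqref{eq:pfdr} reads $R_n(u)\ge C$ with
\[
  R_n(u)=\frac{P(\bar X+u\ge a_nV)}{P(\bar X\ge a_nV)},
\]
which is exactly the ratio in Theorem~\ref{thm:self-norm} with $d_n=nu$. The coupling is natural: at $n$ of order $k_*(u)$ one expects $nu$ of order $z_0\ln C$, a bounded quantity, so plugging a \emph{constant} $d_n\equiv d\in(0,\infty)$ into the theorem and comparing should pin $k_*(u)$ down. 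Two elementary facts carry the argument: (i) for fixed $n$, $u\mapsto R_n(u)$ is non-decreasing on $[0,\infty)$ (the numerator increases in $u$, the denominator is constant), $R_n(0)=1$, and $R_n(u)\downarrow 1$ as $u\downarrow 0$ since $\{\bar X+u\ge a_nV\}\downarrow\{\bar X\ge a_nV\}$; (ii) for the constant choice $d_n\equiv d$, Theorem~\ref{thm:self-norm} gives $R_n(d/n)\to e^{d/z_0}$ as $n\to\infty$.

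From (i), for each fixed $n$ there is $\delta_n>0$ with $R_m(u)<C$ for all $m\le n$ whenever $u<\delta_n$, so $k_*(u)>n$ for all small $u$; hence $k_*(u)\to\infty$ as $u\to 0+$. For the upper bound on $k_*$, fix $\epsilon>0$ and set $m(u)=\lceil(1+\epsilon)(z_0\ln C)/u\rceil$, so $m(u)u\ge(1+\epsilon)z_0\ln C=:D'$ and $m(u)\to\infty$. Then $D'/m(u)\le u$, so (i) gives $R_{m(u)}(u)\ge R_{m(u)}(D'/m(u))$, and by (ii) the right-hand side tends to $e^{D'/z_0}=C^{1+\epsilon}>C$; hence $R_{m(u)}(u)\ge C$ for all small $u$, i.e.\ $k_*(u)\le m(u)<(1+\epsilon)(z_0\ln C)/u+1$.

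For the lower bound, fix $\epsilon\in(0,1)$ and suppose, toward a contradiction, that $k_*(u_j)\le(1-\epsilon)(z_0\ln C)/u_j$ along some $u_j\to 0+$. Put $n_j=k_*(u_j)$; then $n_j\to\infty$ by the previous paragraph, $R_{n_j}(u_j)\ge C$ by the definition of $k_*$, while $n_ju_j\le(1-\epsilon)z_0\ln C=:D''$ gives $u_j\le D''/n_j$, so by (i) and then (ii), $R_{n_j}(u_j)\le R_{n_j}(D''/n_j)\to e^{D''/z_0}=C^{1-\epsilon}<C$, a contradiction. Hence $k_*(u)>(1-\epsilon)(z_0\ln C)/u$ for all small $u$. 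Combining the two bounds, dividing by $(z_0\ln C)/u$, letting $u\to 0+$ and then $\epsilon\to 0$, yields $k_*(u)\sim(z_0/u)\ln C$.

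The only point requiring a little care — and it is genuinely minor — is the limit in (ii): Theorem~\ref{thm:self-norm} is stated for a sequence indexed by all $n$, whereas above I evaluate $R_n(d/n)$ only along $n=m(u)$ or $n=n_j$. But the convergence $R_n(d/n)\to e^{d/z_0}$ holds along \emph{every} $n\to\infty$, so it holds along these subsequences regardless of whether they exhaust the integers. Using a constant comparison level $d_n\equiv d$ — rather than trying to plug the varying values $n_ju_j$ directly into the theorem, which would require a uniformity it does not literally assert — is what keeps this clean, and the monotonicity in (i) is the device that lets us get away with it.
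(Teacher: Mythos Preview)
Your proof is correct and follows essentially the same strategy as the paper's: monotonicity of $u\mapsto R_n(u)$ combined with Theorem~\ref{thm:self-norm} applied at constant comparison levels $d$. Your organization is arguably cleaner---you run an explicit two-sided sandwich in $\epsilon$ rather than the paper's bounded-plus-subsequence argument, and your proof that $k_*(u)\to\infty$ via the decreasing events $\{\bar X+u\ge a_nV\}\downarrow\{\bar X\ge a_nV\}$ is more elementary than the paper's appeal to a Fatou-type dominated convergence lemma.
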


Many probability densities satisfy conditions 1)--3) of Theorem
\ref{thm:self-norm}, for example, Gaussian density $f_1(x; \mu, \r) =
e^{-(x-\mu)^2/2\r^2}/\sqrt{2\pi}\r$ and Cauchy density $f_2(x; \mu,
\r) = \r\pi^{-1}[\r^2+(x-\mu)^2]^{-1}$.  In particular, when $\mu=0$
and $\r=1$, both have $z_0=1$.  Therefore, even though all the moments
of $f_1$ are finite whereas all those of $f_2$ are infinite, in terms
of the amount of data needed to control the pFDR, these two are
asymptotically the 
same.  On the other hand, Theorem \ref{thm:self-norm} is not
applicable to densities with zeros on $\Reals$.  Since the conclusion
of Theorem \ref{thm:self-norm} has nothing to do with the continuity
of $h = \log f$ over $\Reals$, it is desirable to remove condition~3)
altogether.

In the rest of the paper, Section~2 proves Theorem \ref{thm:self-norm}
and Corollary \ref{cor:minn}.  Sections~3 and 4 contain proofs of
lemmas for the main results.

\SECT{Proof of main results}
A key to the proof is the fact that the analysis can be localized at
$z_0$, which is revealed by a representation of the event $\{T_n\ge
\sqrt{n} a_n\}$ given by Shao \cite{shao:97}.  It is easily seen that
for $t>0$,
\begin{align*}
  &
  \Cbr{T_n \ge t} = \Cbr{\frac{S_n}{Q_n} \ge
    t\Grp{\frac{n}{n+t^2}}^{1/2}}, \\
  \text{where}\quad&
  Q_n = \inline{\sqrt{X_1^2+\cdots+X_n^2}}\,,
\end{align*}
(cf.\ \cite{shao:97}).  If $t = \sqrt{n} a_n$, then, letting $r = 1-
(1+a_n^{-2})^{-1/2}$ and following \cite{shao:97},
\begin{align*}
  \Cbr{T_n \ge \sqrt{n}a_n}
  &
  = \Cbr{\frac{S_n}{Q_n\sqrt{n}} \ge 1-r} \\
  &=
  \Cbr{\sup_{b>0} \sum_{i=1}^n \Sbr{b X_i - \frac{(1-r)}{2}
      (X_i^2 + b^2)}\ge 0} \\
  &=
  \Cbr{\sup_{b>0} \sum_{i=1}^n \Sbr{
      \frac{b^2r(2-r)}{2(1-r)} - \frac{1-r}{2}\Grp{
        X_i - \frac{b}{1-r}
      }^2}\ge 0}\\
  &=
  \Cbr{\sup_{b>0} \sum_{i=1}^n \Sbr{
      \frac{b^2r(2-r)}{(1-r)^2} - \Grp{
        X_i - \frac{b}{1-r}
      }^2}\ge 0}\,.
\end{align*}
Let $z=b/(1-r)$ and $\r_n = \sqrt{r(2-r)}$.  Then
\begin{align}
  \Cbr{T_n \ge \sqrt{n} a_n}
  &
  = \Cbr{
    \r_n^2\ge \inf_{z>0}\nth n\sum_{i=1}^n \fx{X_i}{z}^2
  }.   \label{eq:t-local}
\end{align}

Under the assumption of Theorem \ref{thm:self-norm}, $r = 1-
(1+a_n^{-2})^{-1/2} = a_n^{-2}/2 + o(a_n^{-2})$, and hence $\r_n^2
\sim 2 r = a_n^{-2} + o(a_n^{-2})$, yielding
\begin{align}
  \r_n\to 0, \quad n\r_n^4/\log n \sim n/(a_n^4 \log n)\toi.
  \label{eq:var}
\end{align}
Equations \eqref{eq:t-local} and \eqref{eq:var} are the starting
point of the proof.

\begin{lemma}\label{lemma:local}
  Suppose $f$ satisfies condition 1) and 2) in Theorem
  \ref{thm:self-norm}.  Let $\r_n\to 0$  such that $n\r_n^4/\log n\toi$.
  Then, given $r>0$, there is $\delta = \delta(r)>0$, such that
  \begin{align*}
    \lim_{n\toi} \sup_{|d|\le \delta}
    \frac{\displaystyle
      P\Cbr{\r_n^2 \ge \inf_{z>0} \nth n \sum_{i=1}^n \fx{X_i+d}{z}^2}
    }{\displaystyle
      P\Cbr{\r_n^2 \ge \inf_{|z-z_0|\le r} \nth n \sum_{i=1}^n
        \fx{X_i+d}{z}^2}
    } = 1.
  \end{align*}
\end{lemma}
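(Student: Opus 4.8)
The plan is to show that the probability in the numerator is asymptotically dominated by the contribution of $z$ in a neighborhood of $z_0$, i.e.\ that allowing $z$ to range over all of $(0,\infty)$ rather than over $|z-z_0|\le r$ costs nothing in the limit. Write $g_d(z) = \nth n \sum_i \fx{X_i+d}{z}^2$ for the empirical functional and let $G(z) = E\fx{X+d}{z}^2$ be its mean (for $|d|\le\delta$, uniformly in $d$). The event in the denominator is $\{\inf_{|z-z_0|\le r} g_d(z) \le \r_n^2\}$; the numerator event adds the three regions $z$ small, $z\in[z_0-r',z_0+r']^c$ bounded away from both $0$ and $\infty$, and $z$ large. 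Since $\r_n\to 0$, the inequality $g_d(z)\le \r_n^2$ forces $g_d(z)$ to be very small, which (because each summand $\fx{X_i+d}{z}^2\ge 0$) means that a positive fraction of the $X_i+d$ must cluster tightly around $z$. The heuristic is that $P(g_d(z)\le \r_n^2)$ behaves, to exponential order, like $[\,z f(z)\,]^n$ times a slowly varying factor, so that the ratio of the full-range probability to the near-$z_0$ probability is at most a sum of terms of the form $[\,zf(z)/z_0f(z_0)\,]^n$, which tend to $0$ by condition~2).

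First I would make this rigorous for $z$ in a fixed compact set bounded away from $0$. For such $z$, by a Chernoff/large-deviation bound on the nonnegative i.i.d.\ sum $\sum_i \fx{X_i+d}{z}^2$, one gets $P(g_d(z)\le \r_n^2) \le \exp\{n\Lambda_z(\r_n^2)\}$ where $\Lambda_z(t)\to \log E[e^{-\lambda \fx{X+d}{z}^2}]$ bounds become sharp as $t\to 0$; the leading term of $P(g_d(z)\le \r_n^2)$ as $\r_n\to 0$ is governed by the density of $X+d$ at $z$, i.e.\ by $f(z-d)$, uniformly for $|d|\le\delta$ with $\delta$ small by continuity of $f$. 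Discretizing the compact region into $O(\r_n^{-c})$ small boxes and using a union bound together with monotonicity/continuity estimates on $g_d$, the full-range probability over that compact region is bounded by (polynomial in $\r_n^{-1}$) times $\sup_z [\,zf(z-d)/(z_0 f(z_0-d))\,]^n$ times the near-$z_0$ probability; by condition~2) and the uniform continuity in $d$, the supremum over $z$ with $|z-z_0|\ge r$ is bounded by some $\rho<1$, and $\rho^n$ kills the polynomial factor. The condition $n\r_n^4/\log n\toi$ is exactly what is needed to ensure the near-$z_0$ probability is not itself super-exponentially small, so the polynomial overhead from the $\r_n^{-c}$ boxes is harmless; this is where that hypothesis enters.

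Next I would handle the two tails. For $z\to 0$: since each summand is $(X_i+d)^2/z^2 - 2(X_i+d)/z + 1$, on the event $g_d(z)\le\r_n^2$ we need $\nth{nz^2}\sum_i (X_i+d)^2$ close to $\frac2z\cdot\nth n\sum_i(X_i+d)$, which for small $z$ is impossible unless essentially all the mass $(X_i+d)$ is $\approx z$, an event of probability at most $[\sup_{|y|\le 2z} f(y)]^n$ up to constants; since $f$ is bounded and $\sup_{|y|\le 2z}f(y)$ stays bounded, and the required clustering is around a point where $zf$ is not maximal (as $z\to 0$, $zf(z)\to 0$ by boundedness of $f$, or is at any rate $<z_0 f(z_0)$), this contributes a factor $\rho^n$. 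For $z\to\infty$: here condition~1), $\Lsup x^{1+\gamma}f(x)<\infty$, forces $f(z-d)=O(z^{-1-\gamma})$, so $zf(z-d)=O(z^{-\gamma})\to 0$, again beating $z_0f(z_0)$; one also has to control that $g_d(z)\to 1$ deterministically (law of large numbers: $g_d(z)\to G(z)\to 1$ as $z\to\infty$), so for $z$ large enough $\{g_d(z)\le\r_n^2\}$ is empty for all large $n$, and for the intermediate large-$z$ range the tail bound on $f$ gives the geometric factor. Assembling the three regions, the ratio is at most a fixed polynomial in $\r_n^{-1}$ times $\rho^n$ for some $\rho = \rho(r)<1$ uniformly in $|d|\le\delta$, which $\to 0$; adding back the denominator's own probability in the obvious way gives ratio $\to 1$.

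The main obstacle I anticipate is making the ``$P(g_d(z)\le \r_n^2)$ is, to the right exponential order, controlled by $zf(z-d)$'' step uniform simultaneously in $z$ (over a compact set, via discretization) and in $d$ (over $|d|\le\delta$), while only using continuity and boundedness of $f$ from conditions~1)--2) — in particular without differentiability, since the lemma deliberately omits condition~3). The clean way is probably to avoid a precise asymptotic for $P(g_d(z)\le\r_n^2)$ and instead prove the \emph{one-sided} comparison directly: bound $P\{\inf_{z>0}g_d(z)\le\r_n^2\}$ above by summing, over a grid, crude Chernoff bounds, and bound $P\{\inf_{|z-z_0|\le r}g_d(z)\le\r_n^2\}$ below by a single well-chosen event (e.g.\ that at least a fixed fraction of the $X_i+d$ fall in a shrinking window around $z_0$ of width comparable to $\r_n$), so that the needed inequality reduces to the elementary fact that $zf(z-d)$ is strictly maximized near $z_0$. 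That reduction is the heart of the argument; the rest is bookkeeping with union bounds and the hypothesis $n\r_n^4/\log n\toi$.
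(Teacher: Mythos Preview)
Your overall architecture is the same as the paper's: reduce to showing the ``away from $z_0$'' probability is negligible relative to the ``near $z_0$'' probability, split the bad region into $z$ small, $z$ large, and $z$ in a compact set away from $z_0$, control each piece by a Chernoff bound after discretizing into $O(\sigma_n^{-1})$ subintervals, and use $n\sigma_n^4/\log n\to\infty$ to absorb the polynomial overhead. That part is fine and matches the paper's Lemmas~\ref{lemma:scale}--\ref{lemma:interval}.

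The gap is in your lower bound for the denominator. The Chernoff upper bound you invoke for a fixed $z$ gives, at the optimal tilt $t=1/(2\sigma_n^2)$,
\[
\tfrac1n\log P\Bigl\{\sigma_n^2\ge \tfrac1n\sum_i\bigl(\tfrac{X_i}{z}-1\bigr)^2\Bigr\}
\;\le\; \log\sigma_n + \log\bigl[\sqrt{2\pi e}\,z f(z)\bigr]+o(1),
\]
so the constant in front of $zf(z)$ is $\sqrt{2\pi e}\approx 4.13$. Your proposed lower bound---the event that all $X_i$ fall in $[z_0(1-\sigma_n),\,z_0(1+\sigma_n)]$---only yields
\[
\tfrac1n\log P\{\text{denominator}\}\;\ge\;\log\sigma_n+\log\bigl[2\,z_0 f(z_0)\bigr]+o(1),
\]
with constant $2$. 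Because $2<\sqrt{2\pi e}$, the comparison you need, $\sqrt{2\pi e}\,zf(z)<2\,z_0f(z_0)$, fails whenever $zf(z)>(2/\sqrt{2\pi e})\,z_0f(z_0)\approx 0.48\,z_0f(z_0)$, i.e.\ for all $z$ sufficiently close to $z_0$. So for small $r$ your ``$\rho<1$'' is actually $\rho>1$ and the argument collapses. The $L^\infty$ box $\{\max_i|X_i/z_0-1|\le\sigma_n\}$ is much smaller than the $L^2$ ball $\{\tfrac1n\sum_i(X_i/z_0-1)^2\le\sigma_n^2\}$, and that volume ratio is exactly the $\sqrt{2\pi e}/2$ you are missing.

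The paper closes this gap by proving a \emph{sharp} lower bound at $z_0$ that matches the Chernoff constant: its Proposition~\ref{prop:ldp} (an application of the Chaganty--Sethuraman exact LDP, using exponential tilting and a local CLT for the tilted sum) gives
\[
P\Bigl\{\sigma_n^2\ge\tfrac1n\sum_i\bigl(\tfrac{X_i}{z_0}-1\bigr)^2\Bigr\}
\;\sim\;\frac{1}{\sqrt{\pi n}}\exp\bigl\{n[\sigma_n^2 t_n+h(t_n)]\bigr\},
\]
with $\sigma_n^2 t_n\to\tfrac12$ and $h(t_n)=\log\sigma_n+\log[\sqrt{2\pi}\,z_0f(z_0)]+o(1)$, so that $\tfrac1n\log$ of the denominator is $\log\sigma_n+\log[\sqrt{2\pi e}\,z_0f(z_0)]+o(1)$, precisely matching the upper bound constant. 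Only then does the gap $D=\log[z_0f(z_0)]-\sup_{|z-z_0|\ge r/2}\log[zf(z)]>0$ do the work. You need either this exact asymptotic or some other argument that recovers the factor $\sqrt{\pi e/2}$ over the naive box bound; without it the proof does not close.
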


The lemma will be proved later.  The following heuristic explains why
the analysis can be localized at $z_0$.  Let $d=0$.  For
$\r_n^2\ll 1$, if the event $E_z=\{\r_n^2\ge (1/n)\sum_{i=1}^n
(X_i/z-1)^2\}$ occurs, then most of $X_i$ must fall between
$(1-\r_n)z$ and $(1+\r_n)z$, implying
\begin{align*}
  \log P(E_z) \approx n \log P(|X-z|\le \r_n z) \approx n \log
  (2\r_n z f(z)).
\end{align*}
As a result, given that at least one $E_z$ occurs, the most likely
value of $z$ should be the maximizer of $z f(z)$, i.e., $z_0$.

The following fact will be used in the proof of Theorem
\ref{thm:self-norm}.  If $X_1,\ldots, X_n$ are iid with density $f$
and $n\ge 3$, then the joint density of $\bar X$ and $V$ is
\begin{align}
  h(t,s) = (\sqrt{n})^n s^{n-2} \int \prod_{i=1}^n f(t+\sqrt{n} s
  \omega_i) \mu_n(d\omega)  \label{eq:sphere}
\end{align}
where $\mu_n$ is the uniform distribution on a $(n-2)$ dimensional
unit sphere perpendicular to $(1,1,\ldots, 1)$ in $\Reals^n$, i.e.,
\begin{align*}
  U_n:=\Cbr{\omega\in\Reals^n: \sum_{i=1}^n\omega_i^2=1,\
    \sum_{i=1}^n \omega_i = 0}\,.
\end{align*}
For completeness, a sketch of the proof of \eqref{eq:sphere} is given
in the Appendix.

Finally, recall that for any $a\in \Reals$ and random variables
$\xi_1, \ldots, \xi_n$,
\begin{align*}
  \nth n\sum_{i=1}^n (\xi_i - a)^2 = (\bar
\xi - a)^2 + V_\xi^2,
\end{align*}
where $\bar\xi$ is the sample mean of $\xi_i$, and $V_\xi = n^{-1/2}
\sqrt{\sum_{i=1}^n (\xi_i - \bar\xi)^2}$ is the biased sample standard
deviation.

\SSECT*{\sc Proof of Theorem \ref{thm:self-norm}}
Fix $d_n\ge 0$ such that $d_n\to d<\infty$.  Given $r>0$, for $n\gg
1$, $|d_n/n|\le\delta$, where $\delta = \delta(r)>0$ is as in Lemma
\ref{lemma:local}.  It therefore suffices to consider the limit of
\begin{align*}
  L_n:=\frac{\displaystyle
    P\Cbr{\r_n^2 \ge \inf_{|z-z_0|\le r} \nth n \sum_{i=1}^n
      \fx{X_{i,n}}{z}^2}
  }{\displaystyle
    P\Cbr{\r_n^2 \ge \inf_{|z-z_0|\le r} \nth n \sum_{i=1}^n
      \fx{X_i}{z}^2}
  }
\end{align*}
where $X_{i,n} := X_i+d_n/n$ has density $f(x-d_n/n)$.  Let
\begin{align*}
  \Gamma_n = \Cbr{(t,s) \in (-\infty,\infty)\times [0,\infty):
    \r_n^2\ge \inf_{|z-z_0|\le r} \frac{(t-z)^2 + s^2}{z^2}}.
\end{align*}
Then for any random variables $\xi_1, \ldots, \xi_n$,
\begin{align*}
  \Cbr{
    \r_n^2 \ge \inf_{|z-z_0|\le r}
    \nth n \sum_{i=1}^n\fx{\xi_i}{z}^2
  }
  =
  \Cbr{(\bar\xi, V_\xi)\in \Gamma_n}.
\end{align*}

Apply the above formula to $X_{i,n}$ and $X_i$ respectively.  By
\eqref{eq:t-local} and \eqref{eq:sphere},
\begin{align*}
  L_n&
  = 
  \frac{
    \displaystyle
    \int_{(t,s, \omega)\in\Gamma_n\times U_n}
    s^{n-2}\prod_{i=1}^n
    f(t-d_n/n+\sqrt{n} s \omega_i)\,\mu_n(d\omega)\,d t\,d s
  }{
    \displaystyle
    \int_{(t,s, \omega)\in\Gamma_n\times U_n}
    s^{n-2}\prod_{i=1}^n
    f(t+\sqrt{n} s \omega_i)\,\mu_n(d\omega)\,d t\,d s
  } \\
  &
  =
  \int_{(t,s, \omega)\in\Gamma_n\times U_n}
  \rho(t,s,\omega) \nu(d t,d s,d\omega),
\end{align*}
where $\nu(dt,ds,d\omega)$ is the probability measure on $\Gamma_n
\times U_n$ proportional to $s^{n-2}\prod_{i=1}^n f(t+\sqrt{n} s
\omega_i)\,\mu_n(d\omega)\,dt\,ds$, and
\begin{align*}
  \rho(t,s,\omega)
  =\frac{\prod_{i=1}^n f(t-d_n/n+\sqrt{n} s \omega_i)}{
    \prod_{i=1}^n f(t+\sqrt{n} s \omega_i)}.
\end{align*}
For each $(t,s,\omega)\in\Gamma_n\times U_n$, by Taylor expansion,
\begin{align*}
  \rho(t,s,\omega)&
  = \exp\Cbr{
    \sum_{i=1}^n \Sbr{
      h(t+\sqrt{n}s\omega_i - d_n/n) - h(t+\sqrt{n}s\omega_i)
    }
  } 
  \\
  &
  = \exp\Cbr{
    -\frac{d_n}{n}\sum_{i=1}^n h'(t+\sqrt{n}s\omega_i) + e_n}
\end{align*}
where $\sup_{(t,s,\omega)} |e_n| = O(d_n^2/n) = O(1/n)$ due to
$\sup_x |h''(x)|<\infty$.   By Taylor expansion and $\cum \omega
n = 0$,
\begin{align*}
  \frac{1}{n}\sum_{i=1}^n h'(t+\sqrt{n}s\omega_i) 
  = h'(t) + \nth n \sum_{i=1}^n h'''(t+\theta \sqrt{n} s \omega_i)
  (\sqrt{n} s\omega_i)^2
\end{align*}
for some $\theta \in (0,1)$.  Because $\omega_i^2$ add up to 1 and
$(t,s)\in \Gamma_n$,
\begin{align*}
  \Abs{\frac{1}{n}\sum_{i=1}^n h'(t+\sqrt{n}s\omega_i) 
    - h'(t)} \le \sup_x |h'''(x)| s^2 \le A\r_n^2,
\end{align*}
where $A = (z_0+r)^2\sup_x |h'''(x)| <\infty$.  For $(t,s)\in
\Gamma_n$, as $|t-z|\le \r_n z$ for some $z\in 
[z_0-r, z_0+r]$, $|t-z_0|\le r+\r_n(z_0+r) < 2r$ for $n\gg 1$.
Combining the above 
bounds,
\begin{align*}
  e^{-d_n\Delta(2r) - A\r_n^2 - \sup |e_n|}\le
  \frac{\rho(t,s,\omega)}{e^{-d_n h'(z_0)}} \le 
  e^{d_n\Delta(2r) + A\r_n^2 +\sup |e_n|},
\end{align*}
with $\Delta(c) = \sup_{|z-z_0|\le c}  |h'(z)-h'(z_0)|$.  Since
$r$ is arbitrary and $h'$ is continuous, from the expression of
$L_n$ and $d_n\to d$, it is seen that $L_n\sim e^{-dh'(z_0)}$ as
$n\toi$.  Finally, since $z_0$ maximizes $\log z + h(z)$, $h'(z_0) =
-1/z_0$.  So $L_n \sim e^{d/z_0}$.  \qed

\SSECT*{\sc Proof of Corollary \ref{cor:minn}}
First, it is necessary to show that as $u\to 0+$, $k_*(u)\toi$.   To
this end, it suffices to show that, when $n$ and $c>0$ are fixed, then
\begin{align*}
  \ell(u):=\frac{P(\bar X + u\ge c V)}{P(\bar X\ge c V)} \to 1,
  \qquad
  \text{as}\ \ u\to 0+,
\end{align*}
where $\bar X$ and $V$ are defined in terms of $X_1, \ldots, X_n$.
The limit follows from a corollary to Fatou's lemma, which states that
if $l_n(x)\le f_n(x)\le u_n(x)$, $l_n(x)\to l(x)$, $f_n(x)\to f(x)$
and $u_n(x)\to u(x)$ pointwise as $n\toi$, and $\int l_n\to \int l$
and $\int u_n\to \int u$, then $\int f_n\to \int f$.  Specifically, let
\begin{align*}
  A(r) = \Cbr{(x_1, \ldots, x_n): r^2\ge \inf_{z>0} \nth n
    \sum_{i=1}^n \fx{x_i}{z}^2}, \qquad\text{for}\ \ r>0.
\end{align*}
Then by \eqref{eq:t-local}, there is $\r\in (0,1)$, such that
\begin{align*}
  P(\bar X + u\ge c V)&
  = 
  \int \cf{x\in A(\r)} \prod_{i=1}^n f(x_i-u)\,d x_1\cdots d x_n \\
  P(\bar X\ge c V) &
  =
  \int \cf{x\in A(\r)} \prod_{i=1}^n f(x_i)\,d x_1\cdots d x_n.
\end{align*}

Apparently, $0\le \cf{x\in A(\r)} \prod_{i=1}^n f(x_i-u) \le
\prod_{i=1}^n f(x_i-u)$, with the right hand side having the
same integral as $\prod_{i=1}^n f(x_i)$.  Since $f(x-u)\to f(x)$
pointwise as $u\to 0$, the above corollary to Fatou's lemma implies
$P(\bar X + u\ge cV)\to P(\bar X\ge cV)>0$.  Then $\ell(u)\to 1$.

Next, we show that $u k_*(u)$ is bounded from $\infty$ as $u\to
0+$.  Suppose that there is a sequence $u_i$ such that $u_i
k_*(u_i)\to \infty$.  Clearly, $n_i:= k_*(u_i)\toi$.  Then, given
any $M$, $u_i n_i\ge M$ for $i\gg 1$ and hence by Theorem
\ref{thm:self-norm},
\begin{align*}
  \frac{
    P(\bar X + u_i \ge
    \mathink{a_{n_i}V})
  }{
    P(\bar X \ge \mathink{a_{n_i}V})
  }
  &
  \ge
  \frac{
    P(\bar X + M/n_i \ge \mathink{a_{n_i}V})
  }{
    P(\bar X \ge \mathink{a_{n_i}V})
  } \\
  &
  \to e^{M/z_0} \gg (1/p-1)(1/\alpha-1),
\end{align*}
which contradicts the definition of $k_*(u_i)$.

It only remains to show that $u k_*(u)\to d_0:=z_0
\ln[(1/p-1)(1/\alpha-1)]$ as $u\to 0$.  It suffices to show that for
any sequence $u_i\to 0$ with convergent $u_i k_*(u_i)$, the limit of
$u_i k_*(u_i)$ is $d_0$.  Indeed, let the limit be $d$.  Then,
following the above argument, $e^{d/z_0}=(1/p-1)(1/\alpha-1)$, giving
$d = d_0$.
\qed

\SECT{Proof of Lemma \ref{lemma:local}}
\begin{lemma}\rm \label{lemma:scale}
  Let $\r\in (0,1)$, $\eta>0$ and $s>0$.  Then
  \begin{align*}
    &
    \Cbr{
      \inf_{s\le z\le (1+\eta\r)s} \nth n\sum_{i=1}^n
      \fx{X_i}{z}^2 \le \r^2
    }\\
    \subset\ &
    \Cbr{
      \nth n \sum_{i=1}^n \fx{X_i}{s}^2 \le (1+\eta\r)^2
      (1+\eta)^2 \r^2
    }.
  \end{align*}
\end{lemma}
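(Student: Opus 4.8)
The plan is to show the set inclusion pointwise: assume $\omega = (X_1,\ldots,X_n)$ lies in the left-hand set, so there is some $z = z(\omega) \in [s, (1+\eta\r)s]$ with $\nth n\sum_i \fx{X_i}{z}^2 \le \r^2$, and deduce the membership in the right-hand set. The idea is that changing the scale from $z$ back to $s$ inflates each term $\fx{X_i}{z}^2$ by a controlled factor, because $z$ and $s$ differ only multiplicatively by at most $1+\eta\r$. Writing $X_i/s - 1 = (z/s)(X_i/z - 1) + (z/s - 1)$, the triangle inequality gives $\Abs{X_i/s - 1} \le (z/s)\Abs{X_i/z - 1} + (z/s - 1)$. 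Since $s \le z \le (1+\eta\r)s$, we have $1 \le z/s \le 1+\eta\r$ and $0 \le z/s - 1 \le \eta\r$, so $\Abs{X_i/s - 1} \le (1+\eta\r)\Abs{X_i/z - 1} + \eta\r$.

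The next step is to square this and average over $i$. Using $(a+b)^2 \le (1+\eta)(a^2 + b^2/\eta) $ — or more simply the crude bound $(a+b)^2 \le (1+\eta)a^2 + (1+1/\eta)b^2$, or even just pulling a factor $(1+\eta)^2$ out by comparing $a+b$ to $(1+\eta)\max\Cbr{a, b/\eta}$ — one wants to arrive at the stated constant $(1+\eta\r)^2(1+\eta)^2\r^2$. Concretely, from $\Abs{X_i/s-1} \le (1+\eta\r)\Sbr{\Abs{X_i/z-1} + \eta\r/(1+\eta\r)}$ and $\eta\r/(1+\eta\r) \le \eta\r$, bounding this by $(1+\eta\r)(1+\eta)\max\Cbr{\Abs{X_i/z-1},\, \eta\r/\eta} $ is not quite clean; instead the cleanest route is: $\Abs{X_i/s - 1} \le (1+\eta\r)\Abs{X_i/z - 1} + \eta\r \le (1+\eta\r)\Sbr{\Abs{X_i/z-1} + \r}$ (using $\eta\r \le (1+\eta\r)\eta\r$ trivially and absorbing — actually $\eta\r \le (1+\eta\r) r$ iff $\eta \le 1 + \eta\r$, which holds when $\eta \le 1$; in general use $\eta\r = (1+\eta\r)\cdot \eta\r/(1+\eta\r)$ and note $\eta\r/(1+\eta\r) < \eta\r \le \eta$, so $\Abs{X_i/s-1} \le (1+\eta\r)\Sbr{\Abs{X_i/z-1} + \eta\r}$, hmm the bookkeeping must be matched to the paper's constant). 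The honest statement of the plan is: apply the triangle inequality, then square using $(a+b)^2 \le (1+\eta)^2\max\Cbr{a,b/\eta}^2 \le (1+\eta)^2(a^2 + b^2/\eta^2)$ won't give it either — rather, the right elementary inequality is $(a+b)^2 \le (1+\eta)a^2 + (1+\eta^{-1})b^2$, applied with $a = (1+\eta\r)\Abs{X_i/z-1}$, $b = \eta\r$, giving termwise $\fx{X_i}{s}^2 \le (1+\eta)(1+\eta\r)^2\fx{X_i}{z}^2 + (1+\eta^{-1})\eta^2\r^2$.

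Averaging over $i$ and using $\nth n\sum_i \fx{X_i}{z}^2 \le \r^2$ then yields $\nth n\sum_i \fx{X_i}{s}^2 \le (1+\eta)(1+\eta\r)^2\r^2 + (1+\eta^{-1})\eta^2\r^2 = \r^2\Sbr{(1+\eta)(1+\eta\r)^2 + \eta(1+\eta)} = \r^2(1+\eta)\Sbr{(1+\eta\r)^2 + \eta}$, and since $(1+\eta\r)^2 + \eta \le (1+\eta\r)^2 + \eta(1+\eta\r)^2 \cdot$ — one checks $(1+\eta\r)^2 + \eta \le (1+\eta)(1+\eta\r)^2$ because this reduces to $\eta \le \eta(1+\eta\r)^2$, which is true. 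Hence the average is $\le \r^2(1+\eta)^2(1+\eta\r)^2$, exactly the claimed bound, so $\omega$ lies in the right-hand set. The main obstacle here is purely bookkeeping: matching the elementary quadratic inequality to the precise constant $(1+\eta\r)^2(1+\eta)^2$ stated in the lemma; there is no genuine analytic difficulty, as the result is a deterministic pointwise estimate, so the proof will be a short chain of triangle and AM–GM-type inequalities carried out uniformly over $\omega$ and $z$.
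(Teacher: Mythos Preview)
Your final argument is correct and reaches exactly the stated constant, but it proceeds by a genuinely different route than the paper's proof.

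The paper does \emph{not} work termwise. Instead it uses the bias--variance decomposition twice: writing
\[
\nth n\sum_{i=1}^n\fx{X_i}{s}^2=\frac{1}{ns^2}\sum_{i=1}^n(X_i-\bar X)^2+\fx{\bar X}{s}^2
=\frac{z^2}{s^2}\Sbr{\frac{1}{nz^2}\sum_{i=1}^n(X_i-\bar X)^2+\Grp{\frac{\bar X}{z}-\frac{s}{z}}^2},
\]
then expanding $(\bar X/z-s/z)^2=[(\bar X/z-1)+(1-s/z)]^2$ and recombining the variance term with $(\bar X/z-1)^2$ back into $\nth n\sum_i(X_i/z-1)^2$. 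Using $|\bar X/z-1|\le\r$ (which follows from the hypothesis by Cauchy--Schwarz) and $0\le 1-s/z\le\eta\r$, the bracket is bounded by $\r^2+2\r\cdot\eta\r+(\eta\r)^2=(1+\eta)^2\r^2$, and the prefactor $z^2/s^2\le(1+\eta\r)^2$ gives the result directly.

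Your approach applies the triangle inequality to each $X_i/s-1=(z/s)(X_i/z-1)+(z/s-1)$, squares via $(a+b)^2\le(1+\eta)a^2+(1+\eta^{-1})b^2$, averages, and then closes with the observation $(1+\eta\r)^2+\eta\le(1+\eta)(1+\eta\r)^2$. This is a perfectly valid alternative: it avoids any use of $\bar X$ and is purely termwise, at the cost of one extra elementary inequality at the end to absorb the additive $\eta$ into the multiplicative $(1+\eta)$. The paper's route is slightly cleaner because the perfect square $(1+\eta)^2$ falls out exactly from the cross term $2|\bar X/z-1|(1-s/z)$, with no leftover to reabsorb; but your method has the virtue of not invoking the sample-mean decomposition at all. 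Either way, the content is an elementary deterministic estimate, and your proposal (once the exploratory backtracking is excised) is a complete proof.
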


\begin{proof}
  Suppose $(1/n)\sum_{i=1}^n (X_i/z-1)^2 \le \r^2$ for some  $z\in [s,
  (1+\eta\r)s]$.  Then $|\bar X/z-1| \le \r$.  By $0\le 1-s/z \le
  \eta\r$ and $z^2/s^2 \le (1+\eta\r)^2$,
  \begin{align*}
    &\hspace{-2em}
    \nth n \sum_{i=1}^n \fx{X_i}{s}^2
    = \nth{n s^2} \sum_{i=1}^n (X_i - \bar X)^2 + \fx{\bar X}{s}^2
    \nonumber \\
    &
    = \frac{z^2}{s^2} \Sbr{
      \nth{n z^2} \sum_{i=1}^n (X_i - \bar X)^2 + 
      \Grp{\frac{\bar X}{z}-\frac{s}{z}}^2 
    }
    \nonumber \\
    &
    \le\frac{z^2}{s^2} \Sbr{
      \nth{n} \sum_{i=1}^n \fx{X_i}{z}^2
      + 2\Abs{\frac{\bar X}{z}-1}\Grp{1-\frac{s}{z}}
      + \fx{s}{z}^2
    },
  \end{align*}
  with the last expression no greater than $(1+\eta\r)^2
  (1+\eta)^2\r^2$.
\end{proof}

In the next, let $X_1, X_2, \ldots$ be iid random variables with
density $f$.
\begin{lemma} \rm \label{lemma:upper}
  Suppose $\Lsup_{x\toi} x^{1+\gamma} f(x)<\infty$ for some
  $\gamma>0$.  Let $\r_n\to 0$ such that $\Linf_n n\r_n>0$.  Then,
  given $T>0$ and $\delta>0$, there is $a=a(T,\delta)>0$, such that
  for $n\gg 1$,
  \begin{align*}
    \sup_{|d|\le \delta}\nth n \log P\Cbr{
      \r_n^2\ge \inf_{z\ge a} \nth n \sum_{i=1}^n \fx{X_i+d}{z}^2
    } \le \log\r_n - T.
  \end{align*}
\end{lemma}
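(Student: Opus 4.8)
The plan is to control this probability by covering the range $z\ge a$ with intervals of multiplicative width $1+\r_n$, using Lemma~\ref{lemma:scale} to replace the infimum over each such interval by the value of the sum at the interval's left endpoint, taking a union bound, and summing the resulting single‑endpoint probabilities. Fix $C,x_0>0$ with $f(x)\le Cx^{-1-\gamma}$ for $x\ge x_0$ (possible since $\Lsup_{x\toi}x^{1+\gamma}f(x)<\infty$), and choose $c_0\in(0,\Linf_n n\r_n)$, so that $n\r_n\ge c_0$ for $n\gg1$. Put $s_k=s_k^{(n)}=a(1+\r_n)^k$ for $k\ge0$; these do not depend on $d$. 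Since $\nth n\sum_{i=1}^n\fx{X_i+d}{z}^2\to1$ as $z\toi$ and $\r_n^2<1$ for $n\gg1$, on the event in question the infimum over $[a,\infty)$ is attained at a finite $z$, so from $[a,\infty)=\bigcup_{k\ge0}[s_k,s_{k+1}]$ and Lemma~\ref{lemma:scale} (applied with $\eta=1$ and $X_i+d$ in place of $X_i$),
\begin{align*}
  P\Cbr{\r_n^2\ge\inf_{z\ge a}\nth n\sum_{i=1}^n\fx{X_i+d}{z}^2}
  \le\sum_{k\ge0}p_{n,k}(d),\qquad
  p_{n,k}(d):=P\Cbr{\nth n\sum_{i=1}^n\fx{X_i+d}{s_k}^2\le c_n^2\r_n^2},
\end{align*}
where $c_n=2(1+\r_n)\to2$. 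It therefore suffices to bound $p_{n,k}(d)$ and show $\sum_{k\ge0}p_{n,k}(d)\le e^{n(\log\r_n-T)}$ for $n\gg1$, uniformly in $|d|\le\delta$, once $a=a(T,\delta)$ is taken large.

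For most of the endpoints I would use the exponential Chebyshev inequality at the near‑optimal rate $\lambda=1/(2c_n^2\r_n^2)$: since the $X_i$ are iid, $p_{n,k}(d)\le e^{n/2}\phi_{n,k}(d)^n$, where $\phi_{n,k}(d)=\int e^{-\lambda(x+d-s_k)^2/s_k^2}f(x)\,dx$. I would split this integral at $x=s_k/2$ and take $a\ge\max(4x_0,4\delta)$: on $\{x\ge s_k/2\}$ one has $f(x)\le C(s_k/2)^{-1-\gamma}$, and since $\int_{\Reals}e^{-\lambda(x+d-s_k)^2/s_k^2}\,dx=s_k\sqrt{\pi/\lambda}$, that part is at most $A_k:=2^{1+\gamma}\sqrt{2\pi}\,Cc_n\,s_k^{-\gamma}\r_n$; on $\{x<s_k/2\}$ one has $(x+d-s_k)^2/s_k^2\ge1/16$, so that part is at most $B_n:=e^{-1/(32c_n^2\r_n^2)}$; hence $\phi_{n,k}(d)\le A_k+B_n$. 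The key point is that $A_k$ decays geometrically in $k$, so that $\sum_{k\ge0}A_k^n$ is a convergent geometric series, at most $(1-e^{-\gamma c_0/2})^{-1}\big(2^{1+\gamma}\sqrt{2\pi}Cc_n\,a^{-\gamma}\r_n\big)^n$, because $(1+\r_n)^{-\gamma n}\le e^{-\gamma n\r_n/2}\le e^{-\gamma c_0/2}<1$ for $n\gg1$.

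The main obstacle is that $B_n$ does not depend on $s_k$, so $\sum_{k\ge0}B_n^n=\infty$ and one cannot simply sum $e^{n/2}(A_k+B_n)^n$ over all $k$ — roughly, the Chernoff bound loses its geometric decay for those $s_k$ lying in the range of the bulk of $f$. To get around this I would introduce a cutoff $K_n:=\min\{k:s_k\ge\r_n^{-2/\gamma}\}$; since $\r_n\ge c_0/n$, one has $K_n=O(\log(1/\r_n)/\r_n)=O(n\log n)$, hence $\log K_n=o(n)$. For $k\ge K_n$ I would discard the Chernoff bound and argue directly: $\nth n\sum\fx{X_i+d}{s_k}^2\le c_n^2\r_n^2$ forces, by counting the indices $i$ with $|(X_i+d)/s_k-1|>1/2$, at least $(1-\varepsilon_n)n$ of the $X_i$ to lie in $[s_k/4,2s_k]$, where $\varepsilon_n:=4c_n^2\r_n^2\to0$; hence $p_{n,k}(d)\le e^{nH(\varepsilon_n)}\big(P(X>s_k/4)\big)^{(1-\varepsilon_n)n}\le e^{nH(\varepsilon_n)}\big(\tfrac{C4^\gamma}{\gamma}s_k^{-\gamma}\big)^{(1-\varepsilon_n)n}$, with $H(\varepsilon)=-\varepsilon\log\varepsilon-(1-\varepsilon)\log(1-\varepsilon)\to0$ as $\varepsilon\to0$. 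Summing the resulting geometric series over $k\ge K_n$ and using $s_{K_n}^{-\gamma}\le\r_n^2$ then gives $\nth n\log\sum_{k\ge K_n}p_{n,k}(d)=2\log\r_n+O(1)$, which falls below $\log\r_n-T$ once $\log(1/\r_n)$ is large enough.

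For $k<K_n$ I would use $\sum_{k<K_n}p_{n,k}(d)\le e^{n/2}2^n\big[\sum_{k\ge0}A_k^n+K_nB_n^n\big]$, so that $\nth n\log\sum_{k<K_n}p_{n,k}(d)$ is at most, up to $O(1)$ and $o(1)$ terms, $\max\{\log\r_n-\gamma\log a,\ -1/(32c_n^2\r_n^2)\}$. The first alternative is brought below $\log\r_n-T$ by taking $a$ large (this, together with the requirement $a\ge\max(4x_0,4\delta)$ above, is what pins down $a=a(T,\delta)$); the second tends to $-\infty$ and lies below $\log\r_n-T$ because $\r_n^{-2}$ overwhelms $\log(1/\r_n)$ (using $\r_n^2\log(1/\r_n)\to0$) while $\log K_n=o(n)$. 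Combining the two blocks via $\log(a+b)\le\log2+\max(\log a,\log b)$ (the $n^{-1}\log2$ being absorbed), and noting that $C$, $x_0$, $c_0$, and hence every constant above, are independent of $d$, yields the asserted bound uniformly over $|d|\le\delta$. The only delicate point is the bookkeeping — verifying $\log(1/\r_n)\toi$, $\r_n^2\log(1/\r_n)\to0$ and $\log K_n=o(n)$, all of which follow from $\r_n\to0$ and $\Linf_n n\r_n>0$ — while the individual estimates are routine.
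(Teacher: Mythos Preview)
Your argument is correct, but it takes a longer route than the paper's. The essential difference is the choice of the Chernoff parameter. You take $\lambda=1/(2c_n^2\r_n^2)$ independent of the scale $s_k$, which produces the non-decaying residual $B_n=e^{-1/(32c_n^2\r_n^2)}$ and forces you into the two-regime split at $K_n$ together with the separate combinatorial counting argument for $k\ge K_n$. The paper instead lets the Chernoff parameter grow with the scale: with $s=(1+\eta)\r_n^2$ it sets $t=M(z)/s$ where $M(z)=(\gamma/2)\log z$. Then in the bound
\[
z\int e^{ts-tu^2}f(z+zu)\,du \ \le\ \underbrace{\frac{A e^{M(z)}}{z^\gamma(1-\eta)^{1+\gamma}}\sqrt{\pi s/M(z)}}_{I_1}\ +\ \underbrace{e^{M(z)-M(z)\eta^2/s}}_{I_2},
\]
the factor $e^{-M(z)\eta^2/s}=z^{-\gamma\eta^2/(2s)}$ dominates $e^{M(z)}=z^{\gamma/2}$ because $s\to0$, so \emph{both} $I_1$ and $I_2$ are bounded by a constant times $z^{-\gamma/2}\r_n$. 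This gives uniform geometric decay in $k$ and a single convergent series, with no cutoff and no direct counting needed. The paper also handles the shift $d$ more economically: rather than threading $d$ through every estimate, it first proves the $d=0$ case and then observes $\bigl(\tfrac{X_i+d}{z}-1\bigr)^2=\tfrac{(z-d)^2}{z^2}\bigl(\tfrac{X_i}{z-d}-1\bigr)^2$, reducing the $|d|\le\delta$ statement to the $d=0$ bound with $a$ replaced by $a-\delta$ and $\r_n$ replaced by $a\r_n/(a-\delta)$. Your approach buys you a proof that uses only the most obvious Chernoff tilt and avoids the clever $z$-dependent choice; the paper's approach buys a substantially shorter argument with no regime split.
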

\begin{proof}
  We first show that there is $a=a(T)>0$, such that
  \begin{align}
    \nth n \log P\Cbr{
      \r_n^2\ge \inf_{z\ge a} \nth n \sum_{i=1}^n \fx{X_i}{z}^2
    } \le \log\r_n - T.  \label{eq:upper-0}
  \end{align}

  Fix $\eta\in (0,1)$ with $\eta > (1+\eta/8)^2(1+\eta/4)^2-1$.  Let
  $\alpha_n = 1+\eta\r_n/4$.  
  For $n\ge 1$ with $\r_n<1/2$, $\alpha_n < 1+\eta/8$, so by Lemma
  \ref{lemma:scale}, for any $a>0$,
  \begin{align}
    &\quad
    P\Cbr{\r_n^2 \ge \inf_{z\ge a} \nth n \sum_{i=1}^n \fx{X_i}{z}^2}
    \nonumber\\
    &\le
    \sum_{j=0}^\infty
    P\Cbr{\r_n^2 \ge \inf_{a\alpha_n^j \le z \le
        a\alpha_n^{j+1}} \nth n \sum_{i=1}^n \fx{X_i}{z}^2}
    \nonumber\\
    &\le
    \sum_{j=0}^\infty P\Cbr{
      (1+\eta)\r_n^2 \ge \nth n \sum_{i=1}^n\fx{X_i}{a\alpha_n^j}^2
    }.  \label{eq:upper-1}
  \end{align}
  
  Let $s = (1+\eta) \r_n^2$.  By Chernoff's inequality, for $z>0$
  and $t>0$,
  \begin{align}
    P\Cbr{
      s \ge \nth n\sum_{i=1}^n \fx{X_i}{z}^2
    } 
    \le \Sbr{
      z\int e^{ts - t u^2} f(z+z u)\,d u
    }^n.  \label{eq:upper-2}
  \end{align}
  
  Fix $A > \Lsup_{x\toi} x^{1+\gamma}f(x)$.  Let $M(z) =
  (\gamma/2)\log z$ and $t=M(z)/s$.  Then
  \begin{align*}
    z\int e^{ts - t u^2} f(z+z u)\,d u&
    \le
    \frac{A}{z^\gamma (1-\eta)^{1+\gamma}}
    \int_{-\eta}^{\eta} e^{M(z)-M(z)u^2/s}\,d u \\
    &\quad
    +
    z\int_{|u|\ge \eta} e^{M(z) - M(z) \eta^2/s}
    f(z u+u)\, d u \\
    &
    \le \underbrace{
      \frac{A  e^{M(z)}}{z^\gamma (1-\eta)^{1+\gamma}}
      \sqrt{\frac{\pi s}{M(z)}}
    }_{I_1} + \underbrace{e^{M(z)-M(z) \eta^2/s}}_{I_2}.
  \end{align*}

  Since $e^{M(z)} = z^{\gamma/2}$ and $\sqrt{s} = \sqrt{(1+\eta)}
  \r_n$, for $z\gg 1$, $I_1\le Az^{-\gamma/2} \r_n/2$.  On the other
  hand, $z\gg 1$ and $\r_n \ll 1$, the following
  (in)equalities hold
  \begin{align*}
    I_2 = 
    z^{\gamma(1-\eta^2/s)/2}
    \le z^{-\gamma/2} z^{-\frac{\eta^2}{3(1+\eta)\r_n^2}}
    \le A z^{-\gamma/2} \r_n/2,
  \end{align*}
  so $I_1+I_2 \le A z^{-\gamma/2} \r_n$.  Then
  by \eqref{eq:upper-1}and \eqref{eq:upper-2},
  \begin{align*}
    P\Cbr{\r_n^2 \ge \inf_{z\ge a} \nth n \sum_{i=1}^n \fx{X_i}{z}^2}
    &
    \le
    \sum_{j=0}^\infty \Sbr{A(a\alpha_n^j)^{-\gamma/2}\r_n}^n \\
    &
    = \frac{(A a^{-\gamma/2}\r_n)^n}
    {1-(1+\eta\r_n/4)^{-\gamma n/2}}
  \end{align*}
  
  Since $\r_n\to 0$ and $\Linf_n n\r_n >0$, there is $K>0$ such
  that for all $n\gg 1$, $1- (1+\eta\r_n/4)^{-\gamma n/2}\ge 1-
  e^{-\eta\gamma n\r_n/9} > 1/K$.  Thus
  \begin{align*}
    \nth n \log P\Cbr{
      \r_n^2\ge \inf_{z\ge a} \nth n \sum_{i=1}^n \fx{X_i}{z}^2
    } \le
    \log\r_n + \log (A a^{-\gamma/2}) + \frac{\log K}{n}.
  \end{align*}
  Since $A$ and $K$ are fixed independently of $a$, by choosing
  $a=a(T)$ large enough, \eqref{eq:upper-0} is proved.

  Finally, for $d\in [-\delta,\delta]$ and $z\ge a$,
  \begin{align*}
    \fx{X_i+d}{z}^2 = \frac{(z-d)^2}{z^2} \fx{X_i}{z-d}^2 \ge
    \frac{(a-\delta)^2}{a^2} \fx{X_i}{z-d}^2,
  \end{align*}
  Therefore,
  \begin{align*}
    &\quad
    \sup_{|d|\le \delta}\nth n \log P\Cbr{
      \r_n^2\ge \inf_{z\ge a} \nth n \sum_{i=1}^n \fx{X_i+d}{z}^2
    }
    \\
    &
    \le
    \nth n \log P\Cbr{
      \frac{a^2 \r_n^2}{(a-\delta)^2}
      \ge \inf_{z\ge a-\delta} \nth n \sum_{i=1}^n \fx{X_i}{z}^2
    }.
  \end{align*}
  Then the lemma follows from \eqref{eq:upper-0}.
\end{proof}

\begin{lemma} \rm \label{lemma:lower}
  Suppose $f$ is bounded.  Let $\r_n\to 0$ such that $\Linf_n
  n\r_n>0$.  Then, given $T>0$, there is $b=b(T)>0$, such that for
  $n\gg 1$,
  \begin{align*}
    \sup_{d\in\Reals}\,\nth n  \log P\Cbr{
      \r_n^2\ge \inf_{0<z\le b} \nth n \sum_{i=1}^n \fx{X_i+d}{z}^2
    } \le \log\r_n -T.
  \end{align*}
\end{lemma}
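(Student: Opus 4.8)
\SSECT*{\sc Proof of Lemma \ref{lemma:lower} (proposal)}
The plan is to run the same geometric chaining as in the proof of Lemma~\ref{lemma:upper}, except now sweeping the region of \emph{small} $z$ and using only that $f$ is bounded. The key simplification is that $X_i+d$ has density $f(\,\cdot\,-d)$, which is bounded by $M:=\sup_x f(x)$ for \emph{every} $d$; hence every estimate below will depend on $d$ only through $M$, and the $\sup_{d\in\Reals}$ in the statement is free. So it is enough to bound $\frac1n\log P\Cbr{\r_n^2\ge\inf_{0<z\le b}\nth n\sum_{i=1}^n\fx{Y_i}{z}^2}$ for a generic i.i.d.\ sequence $Y_1,\ldots,Y_n$ with density $g$ satisfying $\sup g\le M$; write $\psi(z):=\nth n\sum_{i=1}^n(Y_i/z-1)^2$.

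Fix $\eta\in(0,1)$ with $(1+\eta/8)^2(1+\eta/4)^2-1<\eta$, as in Lemma~\ref{lemma:upper}, and put $\alpha_n=1+\eta\r_n/4$, so $\alpha_n\to 1$. Since $(0,b]=\bigcup_{j\ge 0}[b\alpha_n^{-j-1},b\alpha_n^{-j}]$ and, a.s., $\psi$ is continuous with $\psi(z)\to\infty$ as $z\to 0+$ (so its infimum over $(0,b]$ is attained, hence attained on some mesh interval), applying Lemma~\ref{lemma:scale} to each mesh interval $[b\alpha_n^{-j-1},b\alpha_n^{-j}]$ --- with the lemma's parameters $(\eta,\r,s)$ chosen as $(\eta/4,\r_n,b\alpha_n^{-j-1})$ --- yields, for $n$ large enough that $\r_n<1/2$,
\[
P\Cbr{\r_n^2\ge\inf_{0<z\le b}\psi(z)}\ \le\ \sum_{j=0}^\infty P\Cbr{(1+\eta)\r_n^2\ge\nth n\sum_{i=1}^n\fx{Y_i}{b\alpha_n^{-j-1}}^2}.
\]

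Each summand I would estimate by Chernoff's inequality exactly as in \eqref{eq:upper-2}: for $z>0$, $s>0$, $t>0$,
\[
P\Cbr{s\ge\nth n\sum_{i=1}^n\fx{Y_i}{z}^2}\ \le\ \Sbr{z\int e^{t(s-u^2)}g(z+zu)\,du}^n\ \le\ \Sbr{zM e^{ts}\sqrt{\pi/t}}^n,
\]
where the second inequality uses $g\le M$; optimizing over $t$ (at $t=1/(2s)$) gives the clean bound $\bigl(zM\sqrt{2\pi e\,s}\bigr)^n$. Taking $s=(1+\eta)\r_n^2$, $z=b\alpha_n^{-j-1}$ and summing the geometric series in $j$,
\[
P\Cbr{\r_n^2\ge\inf_{0<z\le b}\psi(z)}\ \le\ (bMC\r_n)^n\,\frac{1}{1-\alpha_n^{-n}},\qquad C:=\sqrt{2\pi e(1+\eta)}.
\]
This is where the hypothesis $\Linf_n n\r_n>0$ is used: with $c_0:=\Linf_n n\r_n$, for $n\gg 1$ one has $\alpha_n^{-n}=(1+\eta\r_n/4)^{-n}\le e^{-\eta n\r_n/8}\le e^{-\eta c_0/16}<1$, so the last factor is at most a constant $K$ independent of $b$ and of $d$. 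Hence $P\Cbr{\cdots}\le K(bMC\r_n)^n$, so $\frac1n\log P\Cbr{\cdots}\le\log\r_n+\log(bMC)+\frac{\log K}{n}$ uniformly in $d$; for $n\gg 1$ the term $\frac{\log K}{n}$ is negligible, and choosing $b=b(T)$ so small that $\log(bMC)\le-T-1$ completes the proof.

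The one genuinely delicate point is the summability of the geometric series: it forces the chaining mesh to have \emph{relative} width of order $\r_n$ --- so that $\alpha_n^{-n}$ stays bounded away from $1$ via $n\r_n\gtrsim c_0$ --- rather than, say, order $\r_n^2$, which would require $n\r_n^2\gtrsim 1$, a hypothesis we do not have. This is exactly why Lemma~\ref{lemma:scale}, whose interval $[s,(1+\eta\r)s]$ has width linear in $\r$, is the right instrument; and one must keep the prefactor $K$ free of $b$ so that shrinking $b$ actually delivers the $-T$.
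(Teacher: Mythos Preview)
Your proof is correct and follows essentially the same route as the paper's: geometric chaining of $(0,b]$ into intervals $[b\alpha_n^{-j-1},b\alpha_n^{-j}]$ with $\alpha_n=1+\eta\r_n/4$, application of Lemma~\ref{lemma:scale} on each piece, a Chernoff bound using only $\sup f<\infty$ (which is what gives the uniformity in $d$ for free), and summation of the resulting geometric series, with $\Linf_n n\r_n>0$ used to keep $1-\alpha_n^{-n}$ bounded away from $0$. The only cosmetic differences are that the paper does not optimize the Chernoff parameter --- it records the bound directly as $(Ab\alpha_n^{-j}\sqrt{\pi(1+\eta)}\,\r_n)^n$ --- and indexes its grid points at $b\alpha_n^{-j}$ rather than $b\alpha_n^{-j-1}$; neither affects the argument or the conclusion.
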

\begin{proof}
  Given $\eta>0$ such that $\eta > (1+\eta/8)^2(1+\eta/4)^2-1$,
  by the same argument for \eqref{eq:upper-1}, for $b>0$,
  $d\in\Reals$ and $n\ge 1$ with $\r_n<1/2$, letting $\alpha_n =
  1+\eta\r_n/4$,
  \begin{align*}
    &\quad
    P\Cbr{\r_n^2 \ge \inf_{z\le b} \nth n \sum_{i=1}^n
      \fx{X_i+d}{z}^2}
    \\
    &\le
    \sum_{j=0}^\infty P\Cbr{
      \r_n^2 \ge \inf_{b\alpha_n^{-j-1} \le z\le b\alpha_n^{-j}} 
      \nth n \sum_{i=1}^n \fx{X_i+d}{z}^2
    } \\
    &\le
    \sum_{j=0}^\infty P\Cbr{
      (1+\eta)\r_n^2 \ge \nth n \sum_{i=1}^n\fx{\alpha_n^j
        (X_i+d)}{b}^2
    }.
  \end{align*}
  
  Denote $A=\sup f$.  For any $s>0$, $z>0$ and $d\in\Reals$,
  \begin{align*}
    \int e^{-(x/z-1)^2/s} f(x-d)\,d x 
    \le A \int e^{-(x/z-1)^2/s} \,d x = A z\sqrt{\pi s}.
  \end{align*}
  Since the density of $X_i+d$ is $f(x-d)$, by Chernoff's inequality,
  \begin{align*}
    P\Cbr{\r_n^2 \ge \inf_{z\le b} \nth n \sum_{i=1}^n
      \fx{X_i+d}{z}^2}
    &\le \sum_{j=0}^\infty
    (A b\alpha_n^{-j} \inline{\sqrt{\pi(1+\eta)}}\r_n)^n \\
    &
    = \frac{(A b\sqrt{\pi(1+\eta)}\r_n)^n}
    {1-(1+\eta\r_n/4)^{-n}}.
  \end{align*}
  By the same argument for Lemma \ref{lemma:upper}, the lemma is then
  proved.
\end{proof}

\begin{lemma}\rm \label{lemma:interval}
  Let $0<b<a<\infty$ and suppose $f$ is continuous and nonzero in a
  neighborhood of $[b,a]$.  If $\r_n\to 0$, then, given $\eta>0$, for
  $n\gg 1$,
  \begin{align*}
    \nth n \log P\Cbr{
      \r_n^2\ge \nth n \sum_{i=1}^n \fx{X_i}{z}^2
    }
    < \log \r_n + \log [\sqrt{2\pi e}\,z f(z)] + \eta,
  \end{align*}
  holds for all $z\in [b,a]$.
\end{lemma}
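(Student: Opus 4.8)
\noindent\emph{Proof strategy.} The plan is to combine a Chernoff-type exponential bound with a Laplace (Watson's-lemma) estimate of the resulting moment generating function, carried out uniformly in $z\in[b,a]$. First I would note that for any $s>0$, since the $X_i$ are iid,
\begin{align*}
  P\Cbr{\r_n^2\ge\nth n\sum_{i=1}^n\fx{X_i}{z}^2}
  = P\Cbr{n\r_n^2\ge\sum_{i=1}^n\fx{X_i}{z}^2}
  \le e^{sn\r_n^2}\Sbr{\int e^{-s(x/z-1)^2}f(x)\,dx}^n,
\end{align*}
so that, after substituting $x=z(1+u)$,
\begin{align*}
  \nth n\log P\Cbr{\r_n^2\ge\nth n\sum_{i=1}^n\fx{X_i}{z}^2}
  \le s\r_n^2+\log\Grp{z\int e^{-su^2}f(z+zu)\,du}.
\end{align*}
I would then use the optimal tilt $s=1/(2\r_n^2)$, for which $s\r_n^2=\tfrac12$, reducing the lemma to the estimate
\begin{align*}
  z\int e^{-u^2/(2\r_n^2)}f(z+zu)\,du\le\sqrt{2\pi}\,\r_n\,zf(z)\,e^{\eta},
  \qquad z\in[b,a],\ n\gg1,
\end{align*}
because then $\nth n\log P\Cbr{\cdots}\le\tfrac12+\log(\sqrt{2\pi}\,\r_n\,zf(z))+\eta=\log\r_n+\log[\sqrt{2\pi e}\,zf(z)]+\eta$, using $\tfrac12+\tfrac12\log(2\pi)=\log\sqrt{2\pi e}$; running the same argument with $\eta/2$ in place of $\eta$ then yields the strict inequality stated.

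To prove the reduced estimate I would fix a closed interval $[b-\delta_0,a+\delta_0]$ inside the neighborhood on which $f$ is continuous and nonzero, so that $m:=\min_{[b,a]}f>0$ and $f$ is uniformly continuous on $[b-\delta_0,a+\delta_0]$. Given $\epsilon>0$, choose $\rho\in(0,\delta_0]$ with $|f(y)-f(z)|<\epsilon$ whenever $z\in[b,a]$ and $|y-z|\le\rho$, and split $\int\,du$ at $|u|=\rho/a$. On $|u|\le\rho/a$ we have $|zu|\le\rho$, hence $f(z+zu)\le f(z)+\epsilon$, so the central part is at most $(f(z)+\epsilon)\int_{\Reals}e^{-u^2/(2\r_n^2)}\,du=(f(z)+\epsilon)\sqrt{2\pi}\,\r_n$. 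On $|u|\ge\rho/a$, reverting to $x=z(1+u)$ and using $|x-z|\ge z\rho/a\ge b\rho/a$, one has $e^{-(x-z)^2/(2z^2\r_n^2)}\le e^{-\rho^2/(2a^2\r_n^2)}$, so the tail part is at most $z^{-1}e^{-\rho^2/(2a^2\r_n^2)}\int f\le b^{-1}e^{-\rho^2/(2a^2\r_n^2)}$, which, since $\r_n\to0$, is $o(\r_n)$ uniformly in $z$ and hence $\le\epsilon\r_n$ for $n\gg1$. Adding the two parts, multiplying by $z\le a$, and using $zf(z)\ge bm>0$ to turn the additive $O(\epsilon\r_n)$ error into a multiplicative factor $1+O(\epsilon)$, which is $\le e^\eta$ once $\epsilon$ is small, would give the reduced estimate uniformly in $z\in[b,a]$.

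The only point needing care will be the uniformity in $z$: the tilt $s=1/(2\r_n^2)$ does not depend on $z$, the central estimate is uniform because $f$ is uniformly continuous on a compact interval, the tail bound $b^{-1}e^{-\rho^2/(2a^2\r_n^2)}$ is manifestly uniform, and it is the strict positivity of $m=\min_{[b,a]}f$ that lets the additive errors be absorbed into the multiplicative slack $e^\eta$. I do not anticipate a real obstacle: the statement is just a quantitative, uniform version of the heuristic $P\Cbr{\cdots}\approx(2\r_n\,zf(z))^n$, and the constant $\sqrt{2\pi e}>2$ leaves room to spare.
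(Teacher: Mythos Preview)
Your proof is correct and essentially identical to the paper's: both apply the Chernoff bound with the specific tilt $s=1/(2\r_n^2)$, substitute $x=z(1+u)$, and split the resulting Laplace integral into a central part controlled by the (uniform) continuity of $f$ near $[b,a]$ and a tail part that is $o(\r_n)$ uniformly, then absorb the additive error using $\inf_{[b,a]} zf(z)>0$. The only cosmetic difference is that the paper uses the multiplicative bound $f(z+uz)<(1+c)f(z)$ directly on the central part, whereas you use an additive bound $|f(z+zu)-f(z)|<\epsilon$ and convert it afterward.
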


\begin{proof}
  Fix $c>0$ such that $2\log(1+c)<\eta$.  Because $f$ is continuous
  and positive in a neighborhood of $[b,a]$, there is $s>0$, such that
  for all $z\in [b,a]$ and $u\in [-s, s]$, $f(z+uz) < (1+c)f(z)$.
  Then
  \begin{align*}
    I(z)
    &
    :=
    \int \exp\Cbr{
      \nth 2 - \nth{2\r_n^2} \Grp{\frac{x}{z}-1}^2
    } f(x)\,d x \\
    &
    = z\sqrt{e} \int e^{-u^2/(2\r_n^2)} f(z+u z)\,d u \\
    &
    \le
    z\sqrt{e} (1+c) f(z)\int_{-s}^s e^{-u^2/(2\r_n^2)}\,d u
    +
    z e^{(1-s^2/\r_n^2)/2} \int_{|u|\ge s} f(z+u z)\,d u
    \\
    &
    \le
    \sqrt{2\pi e}\r_n (1+c) z f(z) + e^{(1-s^2/\r_n^2)/2}.
  \end{align*}
  By $\inf_{z\in [b,a]} zf(z)>0$ and $\r_n\to 0$, it follows that for
  $n\gg 1$, $I(z) < \sqrt{2\pi e} \r_n(1+c)^2 z f(z) < \sqrt{2\pi e}
  \r_n e^\eta z f(z)$.  Together with Chernoff's inequality, this
  implies the inequality in the lemma.
\end{proof}

To demonstrate Lemma \ref{lemma:local}, we need the following
application of the uniform exact LDP of \cite{chag:seth:93}.  The
result will be proved in the next section.
\begin{prop} \rm \label{prop:ldp}
  Suppose $f$ is bounded on $\Reals$.  Let $z>0$ such that $f$ is
  continuous and nonzero at $z$.  Define
  \begin{align}
    h(t) = \log\Sbr{z\int e^{-t u^2} f(z+u z)\,d u}, \quad t\ge 0.
    \label{eq:h}
  \end{align}
  Let $\r_n\to 0$ such that $n\r_n^4/\log n\toi$.  Then, for each $n$,
  there is a unique $t_n>0$, such that $h'(t_n) = -\r_n^2$, and
  moreover, as $n\toi$,
  \begin{gather}
    t_n \sim \frac{1}{2\r_n^2}, \label{eq:t}\\
    h(t_n) = \log\r_n + \log[\sqrt{2\pi}\,z f(z)] + o(1)
    \label{eq:sigma-h} \\
    P\Cbr{
      \r_n^2 \ge \nth n \sum_{i=1}^n \fx{X_i}{z}^2
    } \sim
    \frac{\exp\Cbr{n(\r_n^2 t_n + h(t_n))}}{\sqrt{\pi n}}.
    \label{eq:exact}
  \end{gather}
\end{prop}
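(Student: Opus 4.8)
The plan is to read \eqref{eq:exact} as a sharp lower-tail large deviation estimate for the i.i.d.\ sum $S_n = Y_1+\cdots+Y_n$, where $Y_i = \fx{X_i}{z}^2 \ge 0$, obtained from an exponential change of measure followed by the uniform exact (Bahadur--Rao type) large deviation expansion of \cite{chag:seth:93}; the asymptotics \eqref{eq:t} and \eqref{eq:sigma-h} come first, from Laplace-type asymptotics for \eqref{eq:h}. Observe that $e^{h(t)} = Ee^{-tY}$, so $h$ is finite and $C^\infty$ on $(0,\infty)$ with $h(0)=0$, while $h'(t) = -E_t Y < 0$ and $h''(t) = \mathrm{Var}_t(Y) > 0$, where $E_t$ is expectation under the tilt $dP_t/dP \propto e^{-tY}$; in particular $h$ is strictly convex and $h'$ is increasing.

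First I would pin down the behaviour of $h$ near $t=\infty$. Substituting $u = v/\sqrt t$ in \eqref{eq:h} and in the integrals obtained from it by differentiation turns $e^{h(t)}$ and its first two derivatives into $zt^{-1/2}\int e^{-v^2}f(z+vz/\sqrt t)\,dv$ and, up to powers of $t^{-1}$, the analogous integrals with $v^2$ or $v^4$ inserted. Since $f$ is bounded, and continuous and nonzero at $z$, dominated convergence shows these integrals tend, as $t\toi$, to $\sqrt\pi\,f(z)$, $\tfrac12\sqrt\pi\,f(z)$ and $\tfrac34\sqrt\pi\,f(z)$; hence $h(t) = -\tfrac12\log t + \log[\sqrt\pi\,zf(z)] + o(1)$, $h'(t)\sim -1/(2t)$, and $h''(t)\sim 1/(2t^2)$. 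Since $h'$ is a continuous increasing bijection of $(0,\infty)$ onto $(h'(0+),0)$ and $\r_n^2\to 0$, for all large $n$ there is a unique $t_n>0$ with $h'(t_n) = -\r_n^2$, and necessarily $t_n\toi$; then $-1/(2t_n)\sim -\r_n^2$ gives \eqref{eq:t}, and inserting $t_n\sim 1/(2\r_n^2)$ into the expansion of $h$ gives \eqref{eq:sigma-h}.

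For \eqref{eq:exact}, set $W_n = S_n - n\r_n^2$. Since $E_{t_n}Y = -h'(t_n) = \r_n^2$, the $t_n$-tilted sum $S_n$ has mean $n\r_n^2$, and the Esscher identity $P(A) = E_{t_n}[\mathbf 1_A\, e^{t_n S_n + n h(t_n)}]$ with $A = \{S_n \le n\r_n^2\}$ gives
\begin{align*}
  P\Cbr{\r_n^2 \ge \nth n\sum_{i=1}^n \fx{X_i}{z}^2}
  = e^{n(\r_n^2 t_n + h(t_n))}\, E_{t_n}\!\Sbr{e^{t_n W_n}\,\cf{W_n\le 0}},
\end{align*}
so it suffices to prove $E_{t_n}[e^{t_n W_n}\cf{W_n\le 0}]\sim 1/\sqrt{\pi n}$. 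Put $v_n = \sqrt{h''(t_n)}\sim 1/(\sqrt 2\,t_n)$, so $W_n$ has tilted variance $n v_n^2 \sim 2n\r_n^4\toi$. Under this condition the uniform exact large deviation theorem of \cite{chag:seth:93} applies to the triangular array $\{(Y_i-\r_n^2)/v_n\}$: because $Y$ has a Lebesgue density (being a smooth function of the absolutely continuous $X$), the characteristic functions of the rescaled $t_n$-tilted summands satisfy the uniform regularity required there, and consequently the standardized tilted sum $W_n/(v_n\sqrt n)$ has near $0$ a density converging to $(2\pi)^{-1/2}$, uniformly on a slowly growing neighbourhood of $0$ and with a uniform bound on $\Reals$.

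To conclude, let $g_n$ be the density of $W_n$; the substitution $w=-s/t_n$ gives
\begin{align*}
  E_{t_n}\!\Sbr{e^{t_n W_n}\,\cf{W_n\le 0}}
  = \frac 1{t_n}\int_0^\infty e^{-s}\, g_n\!\Grp{-\frac s{t_n}}\,ds ,
\end{align*}
and since $t_n v_n\sqrt n \sim \sqrt{n/2}\toi$, the local estimate yields $g_n(-s/t_n) = (1+o(1))(2\pi)^{-1/2}(v_n\sqrt n)^{-1}$ uniformly over a slowly growing range of $s$, while the complementary range contributes negligibly because of the uniform density bound and the factor $e^{-s}$; hence $E_{t_n}[e^{t_n W_n}\cf{W_n\le 0}]\sim (2\pi)^{-1/2}(t_n v_n\sqrt n)^{-1}$, and $t_n v_n\to 1/\sqrt 2$ makes this $(1+o(1))/\sqrt{\pi n}$, which is \eqref{eq:exact}. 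The step I expect to be the main obstacle is this local density estimate for $W_n$: both the tilt $t_n$ and the per-summand scale $v_n$ degenerate as $n\toi$, so one must use the \emph{uniform} version of the exact LDP of \cite{chag:seth:93} and verify its hypotheses for this moving family of tilted laws, and then turn the local density bound into the sharp constant $1/\sqrt{\pi n}$; this is precisely where the assumption $n\r_n^4/\log n\toi$ enters, to force the associated Edgeworth and truncation errors to be $o(1)$.
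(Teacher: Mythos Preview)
Your proposal is correct and follows essentially the same route as the paper: Laplace asymptotics (via the substitution $u=v/\sqrt t$ and dominated convergence) for \eqref{eq:t}--\eqref{eq:sigma-h}, then exponential tilting plus the uniform exact LDP of \cite{chag:seth:93} for \eqref{eq:exact}. The paper works with $T_n=-S_n$ rather than $S_n$ and phrases the final step directly through characteristic-function conditions (its Lemma~\ref{lemma:cs} packages the relevant Chaganty--Sethuraman hypotheses), carrying out in detail precisely the verification you flag as the main obstacle---Lindeberg for the tilted array and uniform bounds on $|G_n(t_n+iy)/G_n(t_n)|$, which is where $n\r_n^4/\log n\to\infty$ enters.
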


\SSECT*{\sc Proof of Lemma \ref{lemma:local}}
It suffices to show that there is $\delta=\delta(r)>0$, such that
\begin{align*}
  \lim_{n\toi} \sup_{|d|\le \delta}
  \frac{\displaystyle
    P\Cbr{\r_n^2 \ge \inf_{z>0,\,|z-z_0|>r} \nth n \sum_{i=1}^n
      \fx{X_i+d}{z}^2}
  }{\displaystyle
    P\Cbr{\r_n^2 \ge \inf_{|z-z_0|\le r} \nth n \sum_{i=1}^n
      \fx{X_i+d}{z}^2}
  } = 0.
\end{align*}
Denote the denominator by $B(r,d)$.  Given $0<\eta\ll 1$,
when $|d|\ll \min(r,z_0)$,
\begin{align*}
  B(r,d)&=
  P\Cbr{\r_n^2 \ge \inf_{|z-z_0|\le r} \nth n
    \Grp{\frac{z-d}{z}}^2 \sum_{i=1}^n
    \fx{X_i}{z-d}^2} \\
  &\ge
  P\Cbr{(1-\eta) \r_n^2 \ge \inf_{|z-z_0|\le r-d} \nth n
    \sum_{i=1}^n \fx{X_i}{z}^2} \\
  &\ge
  P\Cbr{(1-\eta) \r_n^2 \ge \nth n
    \sum_{i=1}^n \fx{X_i}{z_0}^2}.
\end{align*}
Therefore, it is enough to show that there is $\delta>0$, such that
\begin{align}
  \lim_{n\toi} 
  \frac{\displaystyle
    \sup_{|d|\le \delta}
    P\Cbr{\r_n^2 \ge \inf_{z>0,\,|z-z_0|>r} \nth n \sum_{i=1}^n
      \fx{X_i+d}{z}^2} 
  }{\displaystyle
    P\Cbr{(1-\eta) \r_n^2 \ge \nth n\sum_{i=1}^n \fx{X_i}{z_0}^2}
  } = 0.  \label{eq:local-1}
\end{align}

By the assumption of the lemma,
\begin{align*}
  D := \log [z_0 f(z_0)] - \sup_{z>0,\,|z-z_0|\ge r/2} \log [z f(z)]
  > 0.
\end{align*}

By Proposition \ref{prop:ldp}, as long as $\eta>0$ is small
enough, as $n\to 0$,
\begin{align}
  &\quad
  \nth n \log 
  P\Cbr{(1-\eta) \r_n^2 \ge \nth n\sum_{i=1}^n \fx{X_i}{z_0}^2}
  \nonumber \\
  &
  =(1-\eta) \r_n^2 t_n + \log (\sqrt{1-\eta}\,\r_n) + \log [
  \sqrt{2\pi}\, z f(z)] + o(1) \nonumber \\
  &
  \ge M_n :=\log\r_n + \log [\sqrt{2\pi e} z_0 f(z_0)] -
  D/4.
  \label{eq:local-2}
\end{align}

Since $\r_n\to 0$ and $n\r_n^4/\log n\toi$, $n\r_n\toi$ as well.  By
Lemmas \ref{lemma:upper} -- \ref{lemma:lower}, there are $b\in
(0,z_0-r)$, $a\in (z_0+r,\infty)$ and $\delta_0>0$, such that 
\begin{gather}
  \sup_{|d|\le \delta_0}\nth n \log P\Cbr{
    \r_n^2\ge \inf_{z\not\in[b,a]} \nth n \sum_{i=1}^n
    \fx{X_i+d}{z}^2
  } \le M_n - D/2,  \label{eq:local-3}
\end{gather}

Fix $0<\delta \le \delta_0$ such that $\delta < \min(r/2, b/2, a)$
and $z^2 < (1+\eta)(z-\delta)^2$ for all $z\in [b,a]$.  Then
\begin{align*}
  &\quad
  \sup_{|d|\le \delta}
  P\Cbr{
    \r_n^2 \ge \inf_{b\le z\le a,\,|z-z_0|>r}
    \nth n \sum_{i=1}^n \fx{X_i+d}{z}^2
  }
  \\
  &=
  \sup_{|d|\le \delta}
  P\Cbr{
    \r_n^2 \ge \inf_{b\le z\le a,\,|z-z_0|>r}
    \nth n \Grp{\frac{z-d}{z}}^2 \sum_{i=1}^n \fx{X_i}{z-d}^2
  } \\
  &\le
  P\Cbr{
    \sup_{b\le z\le a,\,|d|\le \delta} \Grp{\frac{z}{z-d}}^2 \r_n^2
    \ge \inf_{b-\delta\le z \le a+\delta,\, |z-z_0|\ge r/2} \nth n 
    \sum_{i=1}^n \fx{X_i}{z}^2
  } \\
  &\le
  P\Cbr{
    (1+\eta)\r_n^2 \ge \inf_{z\in J} \nth n 
    \sum_{i=1}^n \fx{X_i}{z}^2
  },
\end{align*}
where $J = [b/2, z_0-r/2] \cup [z_0+r/2, 2a]$.  By Lemma
\ref{lemma:interval} and the definition of $D$, as long as $\eta$ is
chosen small enough, for all $n\gg 1$,
\begin{gather*}
  \sup_{z\in J}
  \nth n \log P\Cbr{
    (1+\eta)^4\r_n^2\ge \nth n \sum_{i=1}^n \fx{X_i}{z}^2
  } \le M_n - D/2.
\end{gather*}

Let $\alpha_n = 1+\eta\r_n$ and $N(n) = \lceil\log(4a/b) /
\log\alpha_n\rceil$.  It is not hard to see that $J$ can be
covered by the union of at most $N(n)$ intervals of the form $I_k
= [x_k, \alpha_n x_k]$.  By Lemma \ref{lemma:scale} and the above
inequality, for $n\gg 1$,
\begin{align*}
  &\quad
  P\Cbr{
    (1+\eta)\r_n^2 \ge \inf_{z\in J} \nth n 
    \sum_{i=1}^n \fx{X_i}{z}^2
  } \\
  &
  \le
  \sum_k
  P\Cbr{
    (1+\eta)\r_n^2 \ge \inf_{z\in I_k} \nth n 
    \sum_{i=1}^n \fx{X_i}{z}^2
  } \\
  &
  \le
  N(n) \max_k 
  P\Cbr{
    (1+\eta)^4\r_n^2 \ge
    \nth n \sum_{i=1}^n \fx{X_i}{\alpha_n x_k}^2
  }
\end{align*}
and hence
\begin{align}
  &\quad
  \sup_{|d|\le \delta} \nth n \log
  P\Cbr{
    \r_n^2 \ge \inf_{b\le z\le a,
      \,|z-z_0|>r} \nth n \sum_{i=1}^n \fx{X_i+d}{z}^2
  }
  \nonumber\\
  &
  \le M_n - \frac{D}{2} + \frac{\log N(n)}{n}.  \label{eq:local-5}
\end{align}

As $n\toi$, $N(n) \sim \log(4a/b)/(\eta\r_n) = O(\r_n^{-1})$.
Since $n\r_n^4/\log n\toi$, $\log N(n)/n \to 0$.  By combining
\eqref{eq:local-2} -- \eqref{eq:local-5}, \eqref{eq:local-1} is thus
proved.  \qed

\SECT{Proof of Proposition \ref{prop:ldp}}
Given $z>0$, the log-moment generating function of $-(X/z-1)^2$ is
$h(t)$, which is defined in \eqref{eq:h}.  It is not hard to see that
for $t\ge 0$, $h(t)<\infty$ and
\begin{align}
  h'(t) &
  =
  -\frac{\int u^2 e^{-t u^2} f(z+u z)\,d u}{
    \int e^{-t u^2} f(z+u z)\,d u} < 0, \label{eq:h-diff} \\
  h''(t) &
  =
  \frac{\int u^4 e^{-t u^2} f(z+u z)\,d u}{
    \int e^{-t u^2} f(z+u z)\,d u} - [h'(t)]^2>0.  \label{eq:h-diff2}
\end{align}
\begin{lemma} \rm \label{lemma:simple}
  Fix $z>0$.  Suppose $f$ is continuous and nonzero at $z$.  Also
  suppose $\sup f < \infty$.  Then for $p>-1$,
  \begin{align*}
    \int |u|^p e^{-t u^2} f(z+u z)\,d u \sim f(z) \Gamma(p') t^{-p'},
    \quad \text{as}\ \ t\toi,
  \end{align*}
  with $p' = (p+1)/2$.
\end{lemma}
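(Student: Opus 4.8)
The plan is to run a standard Laplace/Watson's-lemma localization. As $t\toi$ the weight $e^{-tu^2}$ concentrates the mass of the integral near $u=0$, and there $f(z+uz)\to f(z)$ by continuity; so the leading behaviour should be $f(z)$ times the pure Gaussian moment. The one exact computation needed is the elementary identity $\int_{\Reals}|u|^p e^{-tu^2}\,du=\Gamma(p')\,t^{-p'}$ for $p>-1$ (where $p>-1$ is exactly what makes the integrand locally integrable at $0$), obtained from the substitution $v=tu^2$; this fixes both the constant $\Gamma(p')$ and the scaling $t^{-p'}$ claimed in the lemma.

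Next I would localize. Fix $\varepsilon\in(0,1)$ and, using continuity of $f$ at $z$, pick $\delta>0$ with $|f(z+uz)-f(z)|<\varepsilon$ for all $|u|\le\delta$. Writing $f(z+uz)=f(z)+[f(z+uz)-f(z)]$ on $\{|u|\le\delta\}$ bounds that part of the integral between $(f(z)\mp\varepsilon)\int_{|u|\le\delta}|u|^p e^{-tu^2}\,du$. The truncated Gaussian moment differs from the full one by $\int_{|u|>\delta}|u|^p e^{-tu^2}\,du\le e^{-t\delta^2/2}\int_{\Reals}|u|^p e^{-tu^2/2}\,du=2^{p'}\Gamma(p')\,e^{-t\delta^2/2}\,t^{-p'}=o(t^{-p'})$, so $\int_{|u|\le\delta}|u|^p e^{-tu^2}\,du=\Gamma(p')\,t^{-p'}(1+o(1))$.

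For the outer region $\{|u|>\delta\}$ I would invoke $\sup f<\infty$: there $e^{-tu^2}\le e^{-t\delta^2/2}e^{-tu^2/2}$, whence $\int_{|u|>\delta}|u|^p e^{-tu^2}f(z+uz)\,du\le(\sup f)\,2^{p'}\Gamma(p')\,e^{-t\delta^2/2}\,t^{-p'}=o(t^{-p'})$. Combining the inner and outer estimates, $\limsup_{t}t^{p'}\int_{\Reals}|u|^p e^{-tu^2}f(z+uz)\,du$ and the corresponding $\liminf$ both lie within $\varepsilon\Gamma(p')$ of $f(z)\Gamma(p')$; letting $\varepsilon\downarrow0$ yields the asserted equivalence. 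There is no genuine obstacle here; the closest thing to a subtle point, and the place where the hypotheses are actually used, is the outer-region bound, since $|u|^p f(z+uz)$ need not be Lebesgue integrable on $\Reals$ (for $p\ge 0$) — it is the boundedness of $f$ together with the Gaussian tail that tames it — while $f$ continuous at $z$ powers the replacement of $f(z+uz)$ by $f(z)$ near $0$, and $f(z)\ne0$ is what makes the stated $\sim$ meaningful.
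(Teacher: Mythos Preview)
Your argument is correct and follows essentially the same route as the paper: split at a small $\delta$, use continuity of $f$ at $z$ to sandwich the inner piece, use $\sup f<\infty$ to show the outer piece is $o(t^{-p'})$, and identify the constant via $\int_{\Reals}|u|^p e^{-tu^2}\,du=\Gamma(p')t^{-p'}$. The only cosmetic difference is that the paper uses a multiplicative sandwich $f(z)/c<f(z+uz)<cf(z)$ (possible since $f(z)>0$) where you use the additive $|f(z+uz)-f(z)|<\varepsilon$; both lead to the same conclusion after letting the slack parameter tend to its limit.
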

\begin{proof}
  Given $c>1$, there is $\eta>0$, such that $f(z)/c < f(z+uz) < c
  f(z)$ for all $u\in [-\eta, \eta]$.  Write
  \begin{align*}
    \int |u|^p e^{-t u^2} f(z+u z)\,d u 
    = \int_{-\eta}^\eta  + \int_{|u|>\eta} = I_1 + I_2.
  \end{align*}
  By the selection of $c$,
  \begin{align*}
    \frac{f(z)}{c} \int_{-\eta}^\eta |u|^p e^{-t u^2} \,d u < I_1
    <    c f(z)\int_{-\eta}^\eta |u|^p e^{-t u^2} \,d u.
  \end{align*}
  As $t\toi$,
  \begin{gather*}
    \int_{-\eta}^\eta |u|^p e^{-t u^2}\,d u \sim \int
    |u|^p e^{-t u^2}\,d u = \Gamma(p') t^{-p'}, \\
    I_2 \le \sup f \int_{|u|>\eta} |u|^p e^{-t u^2} \,d u = o(t^{-p'}).
  \end{gather*}
  Since $c>1$ is arbitrary, the lemma is proved.
\end{proof}

\SSECT*{\sc Proof of Proposition \ref{prop:ldp}}
Because $h''>0$ on $(0,\infty)$, $h'$ is strictly increasing on
$(0,\infty)$.  By Lemma \ref{lemma:simple}, $h'(t)\sim -(2t)^{-1}$
as $t\toi$.  Thus, by $\r_n\to 0$, for $n\gg 1$, there is a
unique $t_n\toi$ with $\r_n^2 = -h'(t_n) \sim (2t_n)^{-1}$.  This
proves \eqref{eq:t}.  By Lemma \ref{lemma:simple},
\begin{align*}
  h(t_n) 
  = \log \Sbr{ z\int e^{-t_n u^2} f(z+u z)\,d u}
  = \log [\,(1+o(1))z f(z) \mbox{$\sqrt{\pi/t_n}$}\,\,].
\end{align*}
Together with \eqref{eq:t}, this implies \eqref{eq:sigma-h}.

It remains to show \eqref{eq:exact}.  For large $n$, $t_n$ is
well-defined.  Because $\r_n^2 t + h(t)$ is strictly convex, $t_n =
\arg\inf_{t>0} [\r_n^2 t + h(t)]$.  Let
\begin{align*}
  f_n(x) = e^{-t_n(x/z-1)^2 - h(t_n)} f(x).
\end{align*}
It is seen that $f_n$ is a probability density.  Let $\xi_{nk} =
-(\zeta_{nk}/z-1)^2$, where $\zeta_{nk}$ are iid with density $f_n$.
Then by \eqref{eq:h-diff}
\begin{align*}
  E(\xi_{nk}) = -\int (x/z-1)^2 f_n(x) 
  = - z \int u^2 e^{-t_n u^2-h(t_n)} f(z+z u) \,du = h'(t_n)
\end{align*}
and likewise by \eqref{eq:h-diff2}, $\mathrm{Var}(\xi_{nk}) =
h''(t_n)$.  Define
\begin{gather}
  \begin{cases}
    \displaystyle
    Y_n = \frac{\xi_{n1} + \ldots + \xi_{n n} - n h'(t_n)}{\sqrt{n
        h''(t_n)}}, \\[2.5ex]
    \displaystyle
    T_n = -\sum_{i=1}^n \fx{X_i}{z}^2, \ \
    G_n(t) = E[e^{t T_n}], \ \ t>0
  \end{cases} \label{eq:ytg}
\end{gather}
and $\Lambda_n(t) = \log G_n(t)$.  By checking the characteristic
function of $\xi_{n1} + \ldots + \xi_{n n}$, it can be seen that
$Y_n$ also has the representation
\begin{gather}  \label{eq:tilt}
  Y_n \sim \frac{\tilde T_n -
    \Lambda_n'(t_n)}{\sqrt{\Lambda_n''(t_n)}}, \ 
  \text{ with }\ 
  P(\tilde T_n\in d x) = e^{t_n x - \Lambda_n(t_n)} P(T_n\in d x), 
\end{gather}
and hence characteristic function
\begin{gather}
  E[e^{itY_n}] = \exp\Cbr{-
    \frac{i t \Lambda_n'(t_n)}{\sqrt{\Lambda_n''(t_n)}
    }
  } G_n\Grp{t_n + \frac{it}{\sqrt{\Lambda_n''(t_n)}}} \bigg/
  G_n(t_n).  \label{eq:y-cf}
\end{gather}

Since $\Lambda_n(t) = n h(t)$, then $\Lambda_n'(t_n) = -n\r_n^2$ and,
by Lemma \ref{lemma:simple} and \eqref{eq:t},
\begin{align}
  \Lambda_n''(t_n) = n h_n''(t_n) \sim n/(2t_n^2) \sim 2 n\r_n^4,
  \quad\text{as}\ \ n\toi.   \label{eq:tilt-var}
\end{align}
By standard exponential tilting,
\begin{align*}
  &\hspace{-2ex}
  P\Cbr{\r_n^2 \ge \nth n \sum_{i=1}^n \fx{X_i}{z}^2}
  = P\Cbr{T_n \ge -n\r_n^2} \\
  &=
  e^{n\r_n^2 t_n + \Lambda_n(t_n)} E\Sbr{
    \cf{Y_n\ge 0} e^{-t_n\sqrt{\Lambda_n''(t_n)} Y_n}}
\end{align*}
Therefore, in order to show \eqref{eq:exact}, it suffices to show
\begin{align}
  E\Cbr{\cf{Y_n\ge 0} e^{-t_n\sqrt{\Lambda_n''(t_n)}Y_n}}
  \sim \nth{t_n\sqrt{2\pi\Lambda_n''(t_n)}}.
  \label{eq:cs-limit}
\end{align}
The proof is based on the next lemma, which is essentially established
in \cite{chag:seth:93}. 
\begin{lemma}\rm \label{lemma:cs}
  For each $n$, let $T_n$ be a random variable such that $G_n(t) =
  E[e^{tT_n}]<\infty$ in a neighborhood of $t_n\in\Reals$.  Let
  $\Lambda_n(t) = \log G_n(t)$ and $Y_n$ be defined as in
  \eqref{eq:tilt}.  Suppose that, as $n\toi$,
  \begin{gather}
    \Lambda''_n(t_n) \toi, \quad t_n^2 \Lambda''_n(t_n)\toi,
    \label{eq:cs-basic} \\
    Y_n \stackrel{d}{\to} N(0,1), \label{eq:cs-wc}
  \end{gather}
  and there is $\delta>0$ and $n_0\ge 1$, such that
  \begin{gather}
    f^*(t) := \sup_{n\ge n_0} \Abs{E[e^{it Y_n}]
      \cf{|t|\le \delta \inline{\sqrt{\Lambda''_n(t_n)}}}} \in L^1,
    \label{eq:cs-l1} \\
    \sup_{\delta < |y| \le \lambda t_n}
    \Abs{
      \frac{G_n(t_n+i y)}{G_n(t_n)}
    } = o\Grp{\nth{t_n\sqrt{\Lambda''_n(t_n)}}}, \forall\lambda>0.
    \label{eq:cs-mgf}
  \end{gather}
  Then \eqref{eq:cs-limit} holds.
\end{lemma}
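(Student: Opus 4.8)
The plan is to establish \eqref{eq:cs-limit} via a Fourier-inversion argument of the type used in saddle-point and local limit theorems. Write $\sigma_n = \sqrt{\Lambda_n''(t_n)}$ and $a_n = t_n\sigma_n$, so that \eqref{eq:cs-basic} gives $\sigma_n\to\infty$ and $a_n\to\infty$. The quantity to analyze is $E\{\cf{Y_n\ge 0}e^{-a_n Y_n}\}$. Using \eqref{eq:tilt}, $Y_n$ is a normalized, exponentially tilted version of $T_n$, so its characteristic function is given by \eqref{eq:y-cf}. By Fourier inversion applied to the density of $Y_n$ (which exists for $n$ large since $T_n$ has a density), one can write
\begin{align*}
  E\Cbr{\cf{Y_n\ge 0}e^{-a_n Y_n}}
  = \nth{2\pi}\int_0^\infty e^{-a_n y}\int_{-\infty}^\infty e^{-ity}\varphi_n(t)\,dt\,dy,
\end{align*}
where $\varphi_n(t) = E[e^{itY_n}]$. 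The idea is to interchange the integrals and compute the $y$-integral explicitly, getting $\nth{2\pi}\int \varphi_n(t)/(a_n + it)\,dt$, and then to show this integral is asymptotically $\nth{a_n\sqrt{2\pi}}$, which is exactly \eqref{eq:cs-limit} after substituting $a_n = t_n\sigma_n$.

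The evaluation of $\nth{2\pi}\int \varphi_n(t)/(a_n+it)\,dt$ proceeds by splitting the range of $t$ into three regions. First, on the central region $|t|\le\delta\sigma_n$, the weak convergence \eqref{eq:cs-wc} says $\varphi_n(t)\to e^{-t^2/2}$ pointwise, and the domination \eqref{eq:cs-l1} lets one pass to the limit; since $a_n\to\infty$, the factor $\nth{a_n+it}$ is essentially $\nth{a_n}$ on any fixed $t$-range, and one expects $\nth{2\pi a_n}\int e^{-t^2/2}\,dt = \nth{a_n\sqrt{2\pi}}$, which is the claimed main term. Care is needed because the contribution is $O(1/a_n)$ while one integrates $\varphi_n$ over a growing range of order $\sigma_n$; the point is that $\varphi_n$ decays like a Gaussian so the effective range is $O(1)$, and the $L^1$ bound $f^*$ makes this rigorous. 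Second, on the intermediate region $\delta\sigma_n < |t| \le \lambda a_n$ for a suitable $\lambda$, one rewrites $\varphi_n(t)$ in terms of $G_n$ via \eqref{eq:y-cf}: $|\varphi_n(t)| = |G_n(t_n + it/\sigma_n)|/G_n(t_n)$, and with $y = t/\sigma_n$ ranging over $\delta < |y| \le \lambda t_n$, the hypothesis \eqref{eq:cs-mgf} gives $|\varphi_n(t)| = o(\nth{a_n})$ uniformly; integrating over a region of length $O(a_n)$ against $\nth{|a_n+it|} \le \nth{a_n}$ still leaves $o(1)\cdot\nth{a_n}$ after being slightly more careful (one should integrate $\nth{|a_n+it|}$ which yields a logarithmic factor, absorbed by choosing the $o(\cdot)$ in \eqref{eq:cs-mgf} with room to spare, or by noting $\int_{|t|\le\lambda a_n}\nth{|a_n+it|}\,dt = O(\log\lambda)$ is bounded for fixed $\lambda$). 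Third, on the tail $|t| > \lambda a_n$, i.e.\ $|y| > \lambda t_n$, one uses the elementary bound $|G_n(t_n+iy)|\le G_n(t_n)$ always, but more importantly here $\nth{|a_n+it|} < \nth{|t|} \le \nth{\lambda a_n}$ and with a direct estimate $|\varphi_n(t)| \le$ something summable, or simply a crude bound showing this contributes at most $O(1/\lambda)\cdot\nth{a_n}$; letting $\lambda\to\infty$ at the end kills it. Actually the cleanest route for the far tail is to control $\int_{|y|>\lambda t_n}|G_n(t_n+iy)/G_n(t_n)|\cdot\nth{|a_n+i\sigma_n y|}\sigma_n\,dy$ directly; here one exploits that $G_n(t) = e^{nh(t)}$ so $G_n(t_n+iy) = e^{nh(t_n+iy)}$ and $h$ is the log of a Fourier-type integral of an $L^\infty$ density, giving explicit decay — but since \eqref{eq:cs-mgf} is only stated for $|y|\le\lambda t_n$, the statement as given really does require sending $\lambda\to\infty$ and absorbing the far tail into the error, so I would include a short lemma (or appeal to \cite{chag:seth:93}) that the far tail is $o(\nth{a_n})$ uniformly.

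The main obstacle is the interchange of the $y$- and $t$-integrals and the justification that $Y_n$ has a density to which Fourier inversion applies: one must check absolute integrability of $\varphi_n$ for each $n$ (which follows from $f$ bounded, since then $T_n$ and its tilt have bounded, integrable characteristic functions — indeed $X/z$ has a density with characteristic function in $L^2$, hence $T_n$, a function of $n$ such variables, is fine for $n\ge 2$). Granting that, the inversion $p_{Y_n}(y) = \nth{2\pi}\int e^{-ity}\varphi_n(t)\,dt$ holds, and $E\{\cf{Y_n\ge0}e^{-a_nY_n}\} = \int_0^\infty e^{-a_ny}p_{Y_n}(y)\,dy = \nth{2\pi}\int\frac{\varphi_n(t)}{a_n+it}\,dt$ by Fubini, legitimate because $e^{-a_ny}|\varphi_n(t)|$ is integrable over $(0,\infty)\times\Reals$. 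After that, the three-region estimate above delivers $\nth{2\pi}\int\frac{\varphi_n(t)}{a_n+it}\,dt \sim \nth{a_n\sqrt{2\pi}} = \nth{t_n\sqrt{2\pi\Lambda_n''(t_n)}}$, which is \eqref{eq:cs-limit}. The secondary technical point to watch is uniformity: the central-region limit uses dominated convergence with dominating function $f^*\in L^1$ from \eqref{eq:cs-l1}, the intermediate region uses \eqref{eq:cs-mgf}, and only the far tail needs an argument not literally packaged in the hypotheses, so I would be explicit that this last piece is where \cite{chag:seth:93} is invoked.
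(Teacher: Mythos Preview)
The paper's proof is much shorter than your proposal: it simply sets $\beta_n=\delta\sqrt{\Lambda_n''(t_n)}$ and $b_n=t_n\sqrt{\Lambda_n''(t_n)}$, observes via \eqref{eq:y-cf} that hypotheses \eqref{eq:cs-l1} and \eqref{eq:cs-mgf} are precisely conditions (2.7)--(2.8) of Theorem~2.3 in \cite{chag:seth:93}, deduces (2.9)--(2.10) there, and then invokes Theorem~2.7 of \cite{chag:seth:93} together with \eqref{eq:cs-wc} to conclude \eqref{eq:cs-limit}. In other words, the lemma is packaged as a direct translation of the Chaganty--Sethuraman machinery; no independent argument is given.

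Your proposal is essentially a sketch of how Chaganty and Sethuraman prove their theorems, so it is correct in spirit but, as a proof of the lemma \emph{as stated}, it has two gaps you should be aware of. First, the lemma is stated for an arbitrary sequence $T_n$ satisfying the listed conditions, not for the specific $T_n=-\sum_i(X_i/z-1)^2$ of Proposition~\ref{prop:ldp}; your justification that $Y_n$ has a density (``since $f$ is bounded, $X/z$ has a density\ldots'') appeals to the particular application and is not available at the level of generality of the lemma. Without $\varphi_n\in L^1$ the Fourier inversion you write down is not literally valid. Second, as you yourself flag, the far tail $|t|>\lambda a_n$ (equivalently $|y|>\lambda t_n$) is not controlled by \eqref{eq:cs-mgf}, and the crude bound $|\varphi_n|\le 1$ against $1/|a_n+it|$ diverges; your proposed remedy is to cite \cite{chag:seth:93} for this piece, at which point the paper's one-line proof is the more honest route. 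If you want a genuinely self-contained argument, the standard device is Esseen-type smoothing: convolve $Y_n$ with an independent variable whose characteristic function has compact support (or decays rapidly), which forces integrability of the smoothed characteristic function and cuts off the far tail, and then show that the smoothing perturbs $E[\mathbf 1\{Y_n\ge 0\}e^{-a_nY_n}]$ by $o(1/a_n)$. That is effectively what \cite{chag:seth:93} do, and it removes both gaps simultaneously.
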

\begin{proof}
  Let $\beta_n = \delta \sqrt{\Lambda''_n(t_n)}$ and $b_n =
  t_n\sqrt{\Lambda''_n(t_n)}$.  Then by \eqref{eq:y-cf}, the
  characteristic function of $Y_n$ satisfies conditions (2.7) and
  (2.8) of Theorem 2.3 in \cite{chag:seth:93}, and hence (2.9) and
  (2.10) there.  Then by $Y_n\to N(0,1)$ and Theorem 2.7 in
  \cite{chag:seth:93}, \eqref{eq:cs-limit} follows.
\end{proof}

Continuing the proof of Proposition \ref{prop:ldp}, it suffices to
verify \eqref{eq:cs-basic} -- \eqref{eq:cs-mgf} for $T_n$ defined in
\eqref{eq:ytg}.  By \eqref{eq:tilt-var} and the assumption that
$n\r_n^4/\log n\toi$, \eqref{eq:cs-basic} is clear.  To show
\eqref{eq:cs-wc}, consider the representation in \eqref{eq:ytg}.
Because $EY_n=0$ and $\mathrm{Var}(Y_n)=1$, we only need to check
the Lindeberg condition, i.e., for any $a>0$,
\begin{align*}
  n E\Sbr{
    \Grp{\frac{\xi_n-h'(t_n)}{\sqrt{n h''(t_n)}}}^2 
    \cf{
      \Abs{\frac{\xi_n-h'(t_n)}{\sqrt{n h''(t_n)}}} \ge a
    }
  }\to 0, \quad\text{with}\ \xi_n\sim\xi_{n k}.
\end{align*}

Since $h'(t_n)=\r_n^2$ and $h''(t_n) \sim 2\r_n^4$, for $n\gg 1$,
$|\xi_n - \r_n^2|\ge a \sqrt{n h''(t_n)}$ implies $|\xi_n|\ge
a\sqrt{n} \r_n^2$ and $|\xi_n - \r_n^2| \le 2 |\xi_n|$.  It thus
suffices to show
\begin{align} \label{eq:cs-lind}
  E\Sbr{\xi_n^2 \cf{|\xi_n|\ge a\sqrt{n}\r_n^2}} = o(\r_n^4).
\end{align}

By the definition of $f_n$, the expectation on the left hand side is
equal to 
\begin{align*}
  &
  e^{-h(t_n)} \int_{(x/z-1)^2\ge a\sqrt{n}\r_n^2} \fx{x}{z}^4
  e^{-t_n(x/z-1)^2} f(x)\,d x \\
  =\ &
  z e^{-h(t_n)} \int_{u^2\ge a\sqrt{n}\r_n^2} u^4 e^{-t_n u^2}
  f(z+z u)\,d u \le\ (z \sup f) e^{-h(t_n)}I_n,
\end{align*}
where, by change of variable $u = x/\sqrt{t_n}$ and $\r_n^2 t_n \sim
1/2$,
\begin{align*}
  I_n:=\int_{u^2\ge a\sqrt{n}\r_n^2} u^4 e^{-t_n u^2}\,d u 
  \le t_n^{-5/2} \int_{x^2\ge b\sqrt{n}} x^4 e^{-x^2} \,d x,
  \quad\text{for}\ n\gg 1,
\end{align*}
with $b\in (0,a/2)$ a constant.  Since
\begin{align*}
  \int_{x^2\ge b\sqrt{n}} x^4 e^{-x^2} \,d x
  = \int_{b\sqrt{n}}^\infty y^{3/2} e^{-y}\,d y \sim (b\sqrt{n})^{3/2}
  e^{-b\sqrt{n}} = o(1),
\end{align*}
$I_n = o(t_n^{-5/2}) = o(\r_n^5)$.  On the other hand, by Lemma
\ref{lemma:simple}, $e^{-h(t_n)} \sim 1/(zf(z) \sqrt{2\pi} \r_n) =
O(\r_n^{-1})$.  As a result, $e^{-h(t_n)} I_n = o(\r_n^4)$, yielding
\eqref{eq:cs-lind}.

To show \eqref{eq:cs-l1} and \eqref{eq:cs-mgf}, notice that 
\begin{align*}
  &
  \Abs{\frac{G_n(t_n + i y)}{G_n(t_n)}} = |\phi_n(y)|^n \\
  \text{where}\quad
  &
  \phi_n(y) = z e^{-h(t_n)}
  \int e^{-t_n u^2 - i y u^2} f(z+z u)\,d u.
\end{align*}

Fix $0<c\ll 1$.  Since $f$ is continuous and nonzero at $z$, there
is $r\in (0,1/2)$ such that $f(z)/(1+c) \le f(z+uz)$ and $1-u^2 \le
\cos u \le 1-u^2/(2+c)$ for $u\in [-r, r]$.  Write 
\begin{align*}
  \phi_n(y)=
  z e^{-h(t_n)}\int_{|u|\le  \sqrt{r/y}} +
  z e^{-h(t_n)}\int_{|u|> \sqrt{r/y}} = I_n(y) +
  J_n(y).
\end{align*}
Then for $n\gg 1$ and $y\in\Reals$, 
\begin{align*}
  \Abs{\mathrm{Re}\,I_n(y)}
  &= z e^{-h(t_n)} \int_{-\sqrt{r/y}}^{\sqrt{r/y}}
  \cos(y u^2) e^{-t_n u^2} f(z+z u)\,d u
  \\
  &
  \le
  1- \frac{z e^{-h(t_n)}}{2+c}
  \int_{-\sqrt{r/y}}^{\sqrt{r/y}} y^2 u^4 e^{-t_n u^2} f(z+z u)\,d u \\
  &
  \le
  1 - \frac{z e^{-h(t_n)} y^2}{2+c} 
  \int_{-\sqrt{r/(y\vee 1)}}^{\sqrt{r/(y\vee 1)}}
  u^4 e^{-t_n u^2} f(z+z u) \,d u
  \\
  &
  \le
  1 - \frac{z e^{-h(t_n)} y^2 f(z)}{(1+c)(2+c)} 
  \int_{-\sqrt{r/(y\vee 1)}}^{\sqrt{r/(y\vee 1)}} u^4 e^{-t_n u^2}
  \,d u
  \\
  &\le
  1 - \frac{y^2 \r_n^4}{2(1+2c)} \nth{\sqrt{2\pi}}
  \int_{-\sqrt{2t_n r/(y\vee 1)}}^{\sqrt{2t_n r/(y\vee 1)}}
  u^4 e^{-u^2/2}\,d u,
\end{align*}
where the last inequality is due to change of variable, 
\eqref{eq:t} and \eqref{eq:sigma-h}.  Since $t_n\toi$, by choosing
$M\gg 1/r$, for $n\gg 1$ and $|y|\le (r/M) t_n$,
\begin{align*}
  |\mathrm{Re}\,I_n(y)| \le  1 - \frac{3 y^2 \r_n^4}{2(1+3c)}.
\end{align*}

On the other hand, by Lemma \ref{lemma:simple}, \eqref{eq:t} and
\eqref{eq:sigma-h}, for $n\gg 1$ and $y\in\Reals$,
\begin{align*}
  \Abs{\mathrm{Im}\,I_n(y)}
  &
  \le z e^{-h(t_n)}
  \int_{-\sqrt{r/y}}^{\sqrt{r/y}} |\sin(y u^2)| e^{-t_n u^2}
  f(z+z u)\,d u
  \\
  &
  \le |y| z e^{-h(t_n)} 
  \int u^2 e^{-t_n u^2} f(z+z u)\,d u \\
  &
  \le \sqrt{1+c}\,|y|\r_n^2\,.
\end{align*}
As a result, for $n\gg 1$ and $|y|\le (r/M)t_n$,
\begin{align}
  |I_n(y)|
  \le \sqrt{\Sbr{1-\frac{3y^2\r_n^4}{2(1+3c)}}^2 + (1+c) y^2\r_n^4}
  \le 1-\frac{y^2 \r_n^4}{1+c}, \label{eq:cs-I}
\end{align}

On the other hand, for $n\gg 1$,
\begin{align*}
  |J_n(y)|
  &\le z e^{-h(t_n)} \sup f \int_{|u|\ge \sqrt{r/y}} e^{-t_n
    u^2} \,d u \\
  &\le
  (1+c) A P(|Z|\ge \inline{\sqrt{2 t_n r/y}}),
\end{align*}
where $Z\sim N(0,1)$ and $A=\sup f/f(z)<\infty$.  Recall that
$P(|Z|\ge x) \sim \sqrt{2/\pi} x^{-1} e^{-x^2/2}$ as $x\toi$.
Therefore, for $M\gg 1/r$ and $|y|\le (r/M)t_n$, $|J_n(y)| \le e^{-t_n
  r/y}\le (y/t_n)^2/20 \le y^2\r_n^4/4$.

Combining the bounds for $I_n(y)$ and $J_n(y)$,
\begin{align}
  |\phi_n(y)| \le 1-\Grp{\nth{1+c}- \nth 4}
  y^2\r_n^4 \le e^{-y^2\r_n^4/2},
  \quad
  |y|\le (r/M)t_n.
  \label{eq:phi}
\end{align}

To verify \eqref{eq:cs-l1} holds for any $\delta>0$ and
$n_0=n_0(\delta)\gg 1$, by \eqref{eq:tilt},
\begin{align*}
  \Abs{E[e^{it Y_n}]}
  =
  \Abs{\frac{G_n(t_n + it/\sqrt{\Lambda_n''(t_n)})}{G_n(t_n)}}
  =
  \Abs{\phi_n\Grp{\frac{t}{\sqrt{\Lambda_n''(t_n)}}}}^n.
\end{align*}
Then, letting $y =t/\!\sqrt{\Lambda_n''(t_n)}$, by \eqref{eq:phi},
for $n\gg 1$ such that $(r/M)t_n \ge \delta$, 
\begin{align*}
  \Abs{
    E[e^{it Y_n}]
    \cf{|t|\le \delta\!\inline{\sqrt{\Lambda_n''(t_n)}}}
  } 
  =
  \Abs{\phi_n(y)}^n \cf{|y|\le\delta} \le 
  e^{-n y^2\r_n^4/2}.
\end{align*}
By \eqref{eq:tilt-var}, the right hand side is no greater than
$e^{-t^2/9}$, which proves \eqref{eq:cs-l1}. 

To verify \eqref{eq:cs-mgf}, fix $\delta>0$ and first let $\lambda \le
r/M$.  Then by \eqref{eq:phi},
\begin{align*}
  \sup_{\delta \le |y| \le \lambda t_n}
  \Abs{\frac{G_n(t_n+i y)}{G_n(t_n)}} =
  \sup_{\delta \le |y| \le \lambda t_n}|\phi_n(y)|^n
  \le e^{-\delta^2 n \r_n^4/2}.
\end{align*}
Since $n\r_n^4/\log n\toi$, the right hand side is $o(1/\sqrt{n})$.
On the other hand, by \eqref{eq:tilt-var}, $t_n
\inline{\sqrt{\Lambda_n''(t_n)}} \sim \sqrt{n/2}$.  Thus
\eqref{eq:cs-mgf} holds.

Finally, let $\lambda > \eta:= r/M$.  From the above proof, it
suffices to bound
\begin{align*}
  \sup_{\eta t_n\le |y|\le \lambda t_n}
  \Abs{
    \frac{G_n(t_n+i y)}{G_n(t_n)}
  }
  =
  \sup_{\eta t_n\le |y|\le \lambda t_n}
  \Abs{
    \frac{\int e^{-(t_n+i y)u^2} f(z+z u)\,d u}{
      \int e^{-t_n u^2} f(z+z u)\,d u
    }
  }^n.
\end{align*}

By change of variable $u = x/\sqrt{2t_n}$ and letting $\theta =
y/t_n$,
\begin{align*}
  &
  \frac{\int e^{-(t_n+i y)u^2} f(z+z u)\,d u}{
    \int e^{-t_n u^2} f(z+z u)\,d u
  }
  = \int e^{-i\theta x^2/2} g_n(x)\,d x, \\
  \text{where}\quad 
  &
  g_n(x) = \frac{e^{-x^2/2} f(z + z x/\sqrt{2t_n})}{
    \displaystyle
    \int e^{-x^2/2} f(z + z x/\sqrt{2t_n})\,d x
  }.
\end{align*}
For $y\in\Reals$ with $|y|\le \lambda t_n$, $\theta\in [-\lambda/2,
  \lambda/2]$.  By the 
continuity of $f$ at $z$ and $f(z)>0$, $g_n(x) \to
e^{-x^2/2}/\sqrt{2\pi}$ pointwise.  So by dominated convergence 
\begin{align*}
  \int e^{-i\theta x^2/2} g_n(x)\,d x \to \frac{1}{\sqrt{1+i\theta}}
\end{align*}
uniformly for $\theta\in [-\lambda/2, \lambda/2]$.  Given $c>1$,
for all $n\gg 1$,
\begin{align*}
  \Abs{\int e^{-i\theta x^2/2} g_n(x)\,d x} \le
  \frac{c}{|\sqrt{1+i\theta}|} =  \frac{c}{(1+\theta^2)^{1/4}}.
\end{align*}
It follows that
\begin{align*}
  \sup_{\eta t_n\le|y|\le \lambda t_n}
  \Abs{
    \frac{G_n(t_n+i y)}{G_n(t_n)}
  }
  \le
  c^n \Grp{1+\frac{\eta^2}{4}}^{-n/4}.
\end{align*}
By choosing $c\approx 1$, the right hand side is $\alpha^n$ for some
$\alpha \in (0,1)$, and hence is $o(1/(t_n\sqrt{\Lambda_n''(t_n)}))$.
The entire \eqref{eq:cs-mgf} is thus verified.  \qed

\SECT*{Appendix}
To prove \eqref{eq:sphere}, let $e_1, \ldots, e_n$ be the standard
basis of $\Reals^n$.  Let $u_0 = (1/\sqrt{n})\sum_{i=1}^n e_i$,
$u_1, \ldots, u_{n-1}\in\Reals^n$ be an 
orthonormal basis.  Under $\{u_i\}$, the coordinates of $\sum_{i=1}^n
X_i e_i$ are $Y_0, Y_1, \ldots, Y_{n-1}$, with $Y_0=\sqrt{n}\bar X$.
Then $\bar X$ and $Y=(Y_1, \ldots, Y_{n-1})$ have joint density
\begin{align*}
  g(t,y) = \sqrt{n} \prod_{i=1}^n f(x_i),\quad
  \text{with }\ 
  \sum_{i=1}^n x_i e_i = \sqrt{n} t u_0 + \sum_{i=1}^{n-1} y_i
  u_i, \ y\in \Reals^{n-1}.
\end{align*}

On the other hand, $V\sim |Y|/\sqrt{n}$ and $\xi := Y/|Y|\in B_{n-1} =
\{x\in \Reals^{n-1}: |x|=1\}$ almost surely, where $|\cdot|$ stands
for the $L^2$-norm. Let $\nu$ be the uniform measure on $B_{n-1}$.  By
$Y = \sqrt{n} V \xi$, $g(t, y)$ and the joint density $k(t,s,z)$ of
$(\bar X, V, \xi)$ with respect to $dt\,ds\, \nu(dz)$ are related via
\begin{align*}
  g(t, y) = \frac{k(t,s,z)}{(\sqrt{n})^{n-1}
    s^{n-2}}, \quad\text{with }\ y = \sqrt{n} s z.
\end{align*}

Since $\phi: z\to \omega = \sum_{i=1}^{n-1} z_i u_i$ is an
isometric mapping from $B_{n-1}$ to $U_n$, $\phi^*\nu$ is the uniform
measure on $U_n$.  Eq.~\eqref{eq:sphere} then follows from
\begin{align*}
  h(t,s)
  &
  = \int k(t,s,z)\,\nu(d z) \\
  &
  =
  (\sqrt{n})^{n-1} s^{n-2} \int g(t, \sqrt{n} s z)\,\nu(d z)
  \\
  &
  =
  (\sqrt{n})^n s^{n-2} \int_{B_{n-1}} \prod_{i=1}^n f(t+\sqrt{n} s
  \omega_i) \,\nu(d z) \\
  &
  =
  (\sqrt{n})^n s^{n-2} \int_{U_n} \prod_{i=1}^n f(t+\sqrt{n} s
  \omega_i) \,(\phi^*\nu)(d\omega).
\end{align*}

\bibliographystyle{acmtrans-ims}

\end{document}